\newtheorem{theorem}{Theorem}[section]
\newtheorem{lemma}[theorem]{Lemma}
\newtheorem{proposition}[theorem]{Proposition}
\theoremstyle{definition}
\theoremstyle{remark}
\numberwithin{equation}{section}
\newcommand{\QED}{\qed}
\newcommand{\n}{\par\noindent}
\newcommand{\sn}{\par\smallskip\noindent}
\newcommand{\mn}{\par\medskip\noindent}
\newcommand{\bn}{\par\bigskip\noindent}
\newcommand{\pars}{\par\smallskip}
\newcommand{\parm}{\par\medskip}
\newcommand{\bfind}[1]{{\bf #1}}
\newcommand{\sep}{^{\rm sep}}
\newcommand{\chara}{\mbox{\rm char}\,}
\newcommand{\trdeg}{\mbox{\rm trdeg}\,}
\newcommand{\Gal}{\mbox{\rm Gal}\,}
\newcommand{\rr}{\mbox{\rm rr}\,}
\newcommand{\ic}{\mbox{\rm IC}\,}
\newcommand{\cal}{\mathcal}
\newcommand{\eu}{\mathfrak}
\newcommand{\N}{\mathbb N}
\begin{document}

\title{Corrections and Notes for the Paper\\
``Value groups, residue fields and bad places of rational
function fields''}

\author{Anna Blaszczok}
\address{Institute of Mathematics,
University of Silesia,
Bankowa 14,
40-007 Katowice,
Poland }
\email{ablaszczok@us.edu.pl}

\author{Franz-Viktor Kuhlmann}
\address{Department of Mathematics and Statistics,
University of Saskatchewan,
106 Wiggins Road,
Saskatoon, Saskatchewan, Canada S7N 5E6}
\email{fvk@math.usask.ca}

\subjclass[2000]{Primary 12J10; Secondary 12J15, 16W60}
\date{22. 2. 2014}
\thanks{Part of this paper was prepared while the first author was a
guest of the Department of Mathematics and Statistics of the University
of Saskatchewan. She gratefully acknowledges their hospitality and
support.\\ The second author was partially supported by a Canadian NSERC
grant. He also gratefully acknowledges the hospitality of the
Institute of Mathematics of the University of Silesia in Katowice, Poland.}

\begin{abstract}
We correct mistakes in the paper \cite{[K1]} and report on recent new
developments which settle cases left open in that paper.
\end{abstract}

\maketitle
\markboth{BLASZCZOK AND KUHLMANN}{CORRECTIONS AND NOTES FOR: VALUE
GROUPS, RESIDUE FIELDS AND BAD PLACES}

%
%
\section{Introduction}
The main theorem of \cite{[K1]} (Theorem~1.6) describes all extensions
of a valuation $v$ of a base field $K$ to a rational function field
\[
F\>=\>K(x_1,\ldots,x_n)
\]
of transcendence degree $n\geq 1$. There was an omission in the last
part, B4), of the theorem; we will state a corrected version of the
full theorem in Section~\ref{sectext}. The case B4), where the desired
value group and residue field extensions $vF|vK$ and $Fv|Kv$ satisfy
that
\begin{equation}                            \label{caseB4}
vF/vK\mbox{ \ and \ } Fv|Kv\mbox{ \ are finite,}
\end{equation}
was the only one that was not completely understood. In particular, a
full converse of the assertion of the theorem in this case is not known.
This is due to deep open problems in the theory of immediate extensions
of valued fields with residue fields of positive characteristic.

In the paragraph following Theorem~1.7 in \cite{[K1]} (which presents a
partial converse of Theorem~1.6), the second author wrote that a full
converse can be given if $Kv$ has characteristic 0 or $(K,v)$ is a
Kaplansky field. This claim, not proven in \cite{[K1]}, is correct, and
we will show an even stronger result in Theorem~\ref{tame_rat_ff} below.
But he also claimed that the reason was that for such valued fields, if
$(\tilde{K},v)$ admits an immediate extension of transcendence degree
$n$, then so does $(K,v)$. This statement is false, as we will show in
Section~\ref{sectext}.

Working from the other direction on closing the gap, we will show in
Theorem~\ref{gen1.1} that the conditions of B4) can be slightly relaxed.
To facilitate a quicker assessment, the proofs of Theorems~\ref{gen1.1}
and~\ref{tame_rat_ff} are shifted to Section~\ref{sectproofs}.
Throughout Sections~\ref{sectext} and~\ref{sectproofs}, we take $F|K$ as
above and assume that $v$ is nontrivial on $K$ (since otherwise there is
no extension of $v$ to $F$ satisfying conditions (\ref{caseB4})).

\parm
In Section~5 of \cite{[K1]} the second author defined homogeneous
sequences as a tool for the computation of the value group and residue
field of a simple extension $(K(x),v)$ of $(K,v)$. Due to a last minute
change in the definition, the crucial Lemma~5.1 of \cite{[K1]} became
false. In Section~\ref{secthom} of this paper we give the correct
definition and show that with the help of it, all results of Section~5
of \cite{[K1]} can be proved. For the convenience of the reader, we will
include all results of that section in the present paper, but will omit
those proofs that do not need to be changed.

\parm
Finally, we include a list of corrections for some typing
errors in \cite{[K1]} in Section~\ref{sectcor} of the present paper.

\parm
We take over the notions and notations from \cite{[K1]}.

%
%
\section{Extensions of valuations to rational function
fields}                                     \label{sectext}
Here is a version of Theorem~1.6 of \cite{[K1]} with a small correction
which we will discuss afterwards:
\begin{theorem}                             \label{MTsevvar}
Let $(K,v)$ be any valued field, $n,\rho,\tau$ non-negative integers,
$n\geq 1$, $\Gamma\ne\{0\}$ an ordered abelian group extension of $vK$
such that $\Gamma/vK$ is of rational rank $\rho$, and $k|Kv$ a field
extension of transcendence degree $\tau$.
\sn
{\bf Part A.} \ Suppose that $n>\rho+\tau$ and that
\sn
A1) \ $\Gamma/vK$ and $k|Kv$ are countably generated,
\n
A2) \ $\Gamma/vK$ or $k|Kv$ is infinite.
\sn
Then there is an extension of $v$ to the rational
function field $K(x_1,\ldots,x_n)$ in $n$ variables such that
\begin{equation}                            \label{rfwvgarf}
vK(x_1,\ldots,x_n)\>=\>\Gamma\;\;\mbox{\ \ and\ \ }\;\;
K(x_1,\ldots,x_n)v\>=\>k\;.
\end{equation}
\mn
{\bf Part B.} \ Suppose that $n\geq\rho+\tau$ and that
\sn
B1) \ $\Gamma/vK$ and $k|Kv$ are finitely generated,
\n
B2) \ if $v$ is trivial on $K$, $n=\rho+\tau$ and $\rho=1$, then $k$
is a simple algebraic extension of a rational function field in $\tau$
variables over $Kv$ (or of $Kv$ itself if $\tau=0$), or a rational
function field in one variable over a finitely generated field extension
of $Kv$ of transcendence degree $\tau-1$,
\n
B3) \ if $n=\tau$, then $k$ is a rational function field
in one variable over a finitely generated field extension of $Kv$ of
transcendence degree $\tau-1$,
\n
B4) \ if $\rho=0=\tau$, then there is an immediate extension $(L|K,v)$
which either is separable-algebraic such that the extension
$(L^h|K^h,v)$ of their respective henselizations is infinite, or is of
transcendence degree at least $n$.
\sn
Then again there is an extension of $v$ to
$K(x_1,\ldots,x_n)$ such that (\ref{rfwvgarf}) holds.
\end{theorem}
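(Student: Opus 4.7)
The plan is to construct the extension inductively, adding one variable at a time via one of three single-variable moves: (i) a \emph{value-transcendental} step, assigning $vx$ to be a prescribed element of $\Gamma$ of infinite order modulo $vK$, which produces $vK(x)=vK\oplus\Z\cdot vx$ and $K(x)v=Kv$; (ii) a \emph{residue-transcendental} step with $vx=0$ and $xv$ chosen transcendental over $Kv$, producing $vK(x)=vK$ and $K(x)v=Kv(xv)$; and (iii) an \emph{immediate transcendental} step, where $x$ is obtained as a limit of a suitable pseudo-Cauchy sequence so that neither the value group nor the residue field changes. By Abhyankar's inequality, each variable realizes at most one unit of rational rank or one unit of residue transcendence degree, so the $n$ variables must be distributed across the three move types according to the budget $(\rho,\tau,n-\rho-\tau)$.

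For Part A, select a $\Q$-basis $\gamma_1,\ldots,\gamma_\rho$ of $\Gamma/vK$ modulo torsion and a transcendence basis $t_1,\ldots,t_\tau$ of $k|Kv$, and use the first $\rho+\tau$ variables to realize them via moves of types (i) and (ii). The remaining algebraic data --- countable torsion in $\Gamma/vK$ and countable algebraic extensions within $k$ --- are packaged, via A1, as a single countably generated immediate extension of the intermediate field, which can then be realized by the $n-\rho-\tau\geq 1$ remaining type (iii) steps through Kaplansky's theory of pseudo-Cauchy sequences; the hypothesis A2 ensures that the surplus variables are available precisely where they are needed to absorb the countably infinite algebraic data.

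For Part B, the tight bound $n=\rho+\tau$ forces every variable to be spent on a move of type (i) or (ii). Conditions B2 and B3 specify the structural shapes of $k|Kv$ for which the residual algebraic data can be absorbed by a single clever choice at the final residue-transcendental step --- essentially, by choosing the residue of the last variable to generate the prescribed simple algebraic extension of a rational function subfield. The genuinely hard sub-case is B4, where $\rho=\tau=0$ and one must realize an immediate transcendental extension of $K$ of transcendence degree $n$. Given the hypothesized immediate $(L|K,v)$: if $L$ has transcendence degree $\geq n$, restrict $v$ to the subfield generated by any $n$ algebraically independent elements of $L$ and transport the resulting immediate extension back to $K(x_1,\ldots,x_n)$; otherwise $L|K$ is separable-algebraic with $[L^h:K^h]=\infty$, and one uses elements of unbounded degree over $K^h$ in $L^h$ to construct, iteratively, immediate simple transcendental extensions as pseudo-Cauchy limits of algebraic-type sequences without limits of bounded degree in $K^h$.

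The principal obstacle is the separable-algebraic sub-case of B4: one must verify that the transcendental pseudo-Cauchy limit extracted from the infinite henselization extension genuinely produces an immediate extension, without accidentally enlarging the value group or residue field, and that the procedure iterates to any prescribed transcendence degree $n$. This step is the one most sensitive to the deep open problems on immediate extensions in positive residue characteristic that motivate the corrections reported in this note.
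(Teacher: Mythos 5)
Your outline reproduces the general skeleton of the proof in \cite{[K1]} (induction over the variables, an Abhyankar-type budget $(\rho,\tau,n-\rho-\tau)$, and the three kinds of one-variable extensions), but it contains one conceptual error and, more importantly, leaves the decisive step unproved. The conceptual error is in Part A: the torsion of $\Gamma/vK$ and the algebraic part of $k|Kv$ cannot be ``packaged as a countably generated immediate extension'' and absorbed by immediate (type (iii)) steps --- an immediate extension by definition changes neither value group nor residue field. What actually happens is that a \emph{single transcendental} element can carry a countably generated algebraic extension of value group and residue field precisely \emph{because} one of these extensions is infinite (this is the role of A2); if both were finite one would be forced into genuinely immediate steps and hence into the very difficulty of case B4. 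Your reading of Part B also silently restricts to $n=\rho+\tau$, whereas the hypothesis is $n\geq\rho+\tau$; the surplus variables when $\rho+\tau\geq 1$ are handled by the fact that a value- or residue-transcendental extension admits immediate extensions of infinite transcendence degree, which is why B4 only needs to address $\rho=0=\tau$.

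The genuine gap is the separable-algebraic alternative of B4, which you correctly identify as ``the principal obstacle'' but then do not close. Your proposed device --- extracting pseudo-Cauchy sequences of unbounded degree from $L^h$ and adjoining transcendental limits of ``algebraic-type sequences'' --- does not work as stated: adjoining a transcendental pseudo limit to a sequence of algebraic type need not yield an immediate extension, and the example in Section~\ref{sectext} (with $K=k(t)(T)$ inside the maximal field $k((t))$) shows that any argument using only the infinitude of $(L|K,v)$, rather than of $(L^h|K^h,v)$, is doomed. The paper closes this case by invoking Theorem~\ref{MTai} (Theorem~1.1 of \cite{[BK1]}): an infinite separable-algebraic subextension of $(L^h|K^h,v)$ forces every maximal immediate extension of $(L,v)$, and hence of $(K,v)$, to have infinite transcendence degree over the base. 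This reduces the separable-algebraic alternative of B4 to the other alternative (an immediate extension of transcendence degree at least $n$), for which the Kaplansky pseudo-Cauchy machinery of \cite{[K1]} applies unchanged. Without this theorem, or an equivalent substitute, your argument does not establish the existence of even one immediate transcendental extension of $(K,v)$ in this case.
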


\pars
In the original version of assumption B4), it was
stated that the existence of an infinite immediate separable-algebraic
extension $(L|K,v)$ is sufficient for the assertion of Theorem~1.6 to
hold. But it has to be required in addition that also the extension
$(L^h|K^h,v)$ of their respective henselizations is infinite. With this
additional assumption the proof of the theorem as presented in
\cite{[K1]} remains unchanged. Note that the henselization is always a
separable-algebraic extension, but the assumption that it is infinite is
not enough for our theorem.

As an example, take
%
%
the Laurent series field $k((t))$ with its $t$-adic valuation. This is a
maximal field, that is, it does not admit any proper immediate
extensions. Take a transcendence basis $T$ of $k((t))|k(t)$. Then
$k((t))$ is the henselization of $K:=k(t)(T)$, an infinite
separable-algebraic and immediate extension of $K$. But $K$ does not
admit any extension of the valuation to $F$ such that $vF=vK$ and
$Fv=Kv$, since then the henselization $F^h$ would be a proper immediate
extension of $K^h=k((t))$.

\pars
It suffices to assume the existence of an infinite immediate
separable-algebraic extension $(L|K^h,v)$ because then $L$, being an
algebraic extension of a henselian field is henselian itself, $(L|K,v)$
is also an immediate separable-algebraic extension, and $L^h=L$ is an
infinite extension of $K^h$.
In order to analyze the situation further, we cite the following
theorem, which is a special case of Theorem 1.1 in the paper
\cite{[BK1]}. This paper is a significantly extended version of the
paper [KU5] cited in~\cite{[K1]}.

\begin{theorem}                       \label{MTai}
Take an algebraic extension $(L|K,v)$ and assume that
the extension $(L^h|K^h,v)$ of their henselizations contains an
infinite separable-algebraic subextension.
Then each maximal immediate extension
of $(L,v)$ has infinite transcendence degree over $L$.
\end{theorem}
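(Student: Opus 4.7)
My plan is to reduce to the henselian case and then, by exploiting the infinite separable-algebraic subextension, to produce pseudo Cauchy sequences in $L$ without limit in $L$ whose pseudo-limits in a fixed maximal immediate extension $\tilde L$ of $(L,v)$ are algebraically independent over $L$. Kaplansky's theory of pseudo Cauchy sequences then yields that $\tilde L$ has infinite transcendence degree over $L$.

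First I would reduce to the henselian case. Since $L^h|L$ is immediate and algebraic, every maximal immediate extension of $(L^h,v)$ serves as a maximal immediate extension of $(L,v)$, and transcendence degrees over $L$ and $L^h$ coincide. Hence we may replace $L$ by $L^h$ and assume that $L$ is henselian and that $L|K^h$ itself contains an infinite separable-algebraic subextension $M$.

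Next I would analyse $M$ through ramification theory. Let $K^r$ denote the absolute ramification (i.e., tame) field of $K^h$ inside $K\sep$. If some element $a\in M$ lies outside $K^r$, then the normal closure of $K^h(a)|K^h$ has nontrivial defect and therefore contains a proper algebraic immediate extension of $K^h$, which provides a pseudo Cauchy sequence in $K^h\subseteq L$ without limit in $K^h$. If instead $M\subseteq K^r$, then at least one of $vM/vK^h$ and $Mv|K^hv$ is infinite, and this infinite tame growth can be used to produce pseudo Cauchy sequences of transcendental type in $L$ by approximation.

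Finally I would iterate along a cofinal chain of finite subextensions $K^h\subseteq M_1\subseteq M_2\subseteq\cdots\subseteq M$ to build infinitely many pseudo Cauchy sequences whose pseudo-limits in $\tilde L$ are algebraically independent over $L$; Kaplansky's theorem that transcendental-type pseudo-limits generate immediate transcendental extensions then completes the argument. The main obstacle I anticipate is controlling algebraic independence through the iteration: at stage $n$ one must verify that the newly constructed pseudo-limit is algebraically independent of the previous ones over $L$, which in the defect case requires propagating the defect through the tower and in the tame case requires a careful choice of approximants. This inductive construction is precisely the technical heart of the argument carried out in \cite{[BK1]}.
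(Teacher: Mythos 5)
First, a point of comparison: the paper does not prove Theorem~\ref{MTai} at all; it is explicitly quoted as a special case of Theorem~1.1 of \cite{[BK1]}, so there is no in-paper proof to measure your argument against. Judged on its own, your proposal is a plan rather than a proof, and it contains one step that is simply wrong. You claim that if some $a\in M$ lies outside the ramification field $K^r$ of $K^h$, then the normal closure of $K^h(a)|K^h$ has nontrivial defect. This fails: an extension can be wildly ramified, hence not contained in $K^r$, and still be defectless --- for instance $\Q_p(p^{1/p})|\Q_p$, whose normal closure is likewise defectless because $\Q_p$ is a defectless field. The dichotomy you want is the one used in the proof of Theorem~\ref{tame_rat_ff} in this paper: either $vM/vK^h$ or $Mv|K^hv$ is infinite, or both are finite, in which case a finite subextension of $M|K^h$ of degree exceeding $(vM:vK^h)\,[Mv:K^hv]$ must carry nontrivial defect by the fundamental inequality.

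The more serious problem is that the decisive step is missing rather than merely sketched. Granting a defect extension (or infinite tame growth of values or residues), you must still exhibit, inside a maximal immediate extension of $L$, infinitely many elements algebraically independent over $L$. A proper immediate \emph{algebraic} extension of $K^h$ only yields a pseudo Cauchy sequence of algebraic type, which contributes nothing to transcendence degree; converting defect, or an infinite supply of new values and residues, into pseudo Cauchy sequences of \emph{transcendental} type whose limits remain algebraically independent as one climbs an infinite tower is exactly the technical content of \cite{[BK1]} --- and your final sentence concedes this by deferring to that reference. Note also that the related Theorem~\ref{almK_inftrdeg}~2), which extracts infinite transcendence degree from a single defect extension, needs the additional hypotheses of $p$-divisible value group and perfect residue field, which indicates that this step is not routine in the generality of Theorem~\ref{MTai}. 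As it stands, the proposal reduces the theorem to the very paper that is cited for it, so it does not constitute an independent proof.
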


If $(L|K^h,v)$ is an infinite immediate separable-algebraic extension,
then by this theorem, $(L,v)$ admits an immediate extension $(M|L,v)$ of
infinite transcendence degree. Since $(L|K,v)$ is immediate too, it
follows that also $(M|K,v)$ is immediate of infinite transcendence
degree. This shows that the above assumption actually implies the other
assumption stated in case B4) of Theorem 1.6 of~\cite{[K1]}: the
existence of an immediate extension of transcendence degree $n$.

The main problem with case (\ref{caseB4}) is that we do not know a full
converse of the corresponding assertion in Theorem~1.6 of~\cite{[K1]}.
In fact, Theorem~1.7 of \cite{[K1]} states only this much:
\sn
{\it Let $n\geq 1$ and $v$ be a valuation on the rational function field
$F=K(x_1,\ldots,x_n)$. Set $\rho=\rr vF/vK$ and $\tau=\trdeg Fv|Kv$.
Then $n\geq\rho+\tau$, $vF/vK$ is countable, and $Fv|Kv$ is
countably generated.

If $n=\rho+\tau$, then $vF/vK$ is finitely generated and $Fv|Kv$ is a
finitely generated field extension. Assertions B2) and B3) of
Theorem~\ref{MTsevvar} hold for $k=Fv$, and if $\rho=0= \tau$, then
there is an immediate extension of $(\tilde{K},v)$ of transcendence
degree~$n$ (for any extension of $v$ from $K$ to $\tilde{K}$).}
\sn
Hence if an extension of $v$ from $K$ to $F$ with properties
(\ref{caseB4}) exists, then the algebraic closure $\tilde{K}$ of $K$
admits an immediate extension of transcendence degree $n$, namely,
$(\tilde{K}.F| \tilde{K},v)$. But we would like to know something about
$K$, not only about its algebraic closure. Is it true that the existence
of an extension with properties (\ref{caseB4}) always implies the
existence of an immediate extension of transcendence degree~$n$? The
following generalization of the assertion of Theorem~\ref{MTsevvar} for
the case (\ref{caseB4}), which we will prove in
Section~\ref{sectproofs}, casts some doubt on this. However, we do not
know of any example where the condition of the theorem is met but an
immediate extension of transcendence degree $n$ does not exist.


\begin{theorem}                             \label{gen1.1}
Take a finite ordered abelian group
extension $\Gamma$ of $vK$ and a finite extension $k$ of $Kv$. Assume
that the henselization $K^h$ admits an infinite separable-algebraic
extension $(L|K^h,v)$ with $vL\subseteq\Gamma$ and $Lv\subseteq k$. Then
there is an extension of $v$ from $K$ to $F$ such that $vF=\Gamma$ and
$Fv=k$.
\end{theorem}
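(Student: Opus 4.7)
The strategy is to reduce to case B4) (first alternative) of the corrected Theorem~\ref{MTsevvar} applied to a finite algebraic extension $K_0$ of $K$ whose henselization carries $L$ as an immediate subextension, and then to adapt the construction of the transcendentals $x_i$ so that $K(x_1,\ldots,x_n)$ itself, rather than just $K_0(x_1,\ldots,x_n)$, has value group $\Gamma$ and residue field $k$.

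Since $vL\subseteq\Gamma$ and $Lv\subseteq k$ are contained in finite extensions of $vK$ and $Kv$, the quotients $vL/vK$ and $Lv/Kv$ are finite. Choose $b_1,\ldots,b_s\in L$ whose values represent cosets of $vL/vK$ and $c_1,\ldots,c_r\in L$ whose residues generate $Lv$ over $Kv$, and set
\[
L_0:=K^h(b_1,\ldots,b_s,c_1,\ldots,c_r),\qquad K_0:=K(b_1,\ldots,b_s,c_1,\ldots,c_r).
\]
Then $K_0|K$ is finite algebraic and $K_0^h=K_0\cdot K^h=L_0$. By construction $vL_0=vL$ and $L_0v=Lv$, so $L|L_0$ is immediate; it is separable-algebraic (inherited from $L|K^h$) and infinite, since $L|K^h$ is infinite while $L_0|K^h$ is finite. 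Thus $(L|K_0,v)$ is an infinite immediate separable-algebraic extension with $L^h=L$ infinite over $K_0^h$, so that $K_0$ lies within the scope of the first alternative of case B4) of Theorem~\ref{MTsevvar} for the finite extensions $\Gamma\supseteq vK_0$ and $k\supseteq K_0v$.

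Next, enlarge $K_0$ to a finite algebraic extension $K_1\supseteq K_0$ with $vK_1=\Gamma$ and $K_1v=k$, by adjoining algebraic elements in $K^{\mathrm{alg}}$ whose values represent cosets of $\Gamma/vK_0$ and whose residues generate $k$ over $K_0v$. The compositum $L\cdot K_1^h$ is henselian (algebraic over $L$) and is an infinite separable-algebraic extension of $K_1^h$. Theorem~\ref{MTai} then provides a maximal immediate extension of $L\cdot K_1^h$ of infinite transcendence degree, inside which we pick $n$ algebraically independent elements $t_1,\ldots,t_n$. Define the $x_i$'s as suitable combinations of the $t_i$'s and algebraic elements $a_i\in K^{\mathrm{alg}}$ whose values and residues realize the required generators of $\Gamma/vK$ and $k/Kv$---for instance, $x_i:=a_i+t_i$, after replacing each $t_i$ by a translate $t_i-s_i$ with $s_i\in L\cdot K_1^h$ (using the pseudo-Cauchy structure of immediate transcendentals) so that $v(t_i)>v(a_i)$, which ensures $v(x_i)=v(a_i)$ and $x_iv=a_iv$.

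The main obstacle is the final verification: that the $x_i$ are algebraically independent over $K$ and that $vK(x_1,\ldots,x_n)=\Gamma$ and $K(x_1,\ldots,x_n)v=k$. Algebraic independence is a change-of-variables computation using the transcendence of the $t_i$ over $K^{\mathrm{alg}}$. The inclusions $\Gamma\subseteq vK(x_1,\ldots,x_n)$ and $k\subseteq K(x_1,\ldots,x_n)v$ follow directly from $v(x_i)=v(a_i)$ and $x_iv=a_iv$ realizing the required generators. The converse inclusions---that no $K$-polynomial expression in the $x_i$'s produces a value outside $\Gamma$ or a residue outside $k$---are delicate, since the ambient field $(L\cdot K_1^h)(t_1,\ldots,t_n)$ may have strictly larger value group than $\Gamma$. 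A careful analysis of $v(p(x_1,\ldots,x_n))$ for $p\in K[X_1,\ldots,X_n]$ using the pseudo-Cauchy structure of the $t_i$ over $L\cdot K_1^h$ is needed to rule out such excursions, and this is the technical heart of the argument.
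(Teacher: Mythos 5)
Your overall strategy---immediate transcendentals over an algebraic extension realizing $\Gamma$ and $k$, translated by algebraic elements---is the right general idea, but there is a genuine gap exactly where you flag one, and the two devices that close it are missing. First, the upper bounds $vK(x_1,\ldots,x_n)\subseteq\Gamma$ and $K(x_1,\ldots,x_n)v\subseteq k$ do not require any ``careful analysis of $v(p(x_1,\ldots,x_n))$'' if the setup is arranged correctly: the paper first replaces $L$ by a separable-algebraic extension $L(a)$ with $vL(a)=\Gamma$ and $L(a)v=k$ \emph{exactly} (possible by Theorem~2.14 of \cite{[K1]}, since $\Gamma/vL$ and $k|Lv$ are finite and $v$ is nontrivial on $L$), so that $K(x_1,\ldots,x_n)$ ends up inside an \emph{immediate} extension of a field whose value group is exactly $\Gamma$ and whose residue field is exactly $k$; the upper bound is then automatic. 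In your setup the ambient field $L\cdot K_1^h$ is an uncontrolled algebraic extension of $L$ whose value group and residue field may strictly exceed $\Gamma$ and $k$, so containment in an immediate extension of it bounds nothing, and the ``excursions'' you worry about are a real obstruction rather than a technicality you can defer.

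Second, your recipe $x_i:=a_i+t_i$ with merely $v(t_i)>v(a_i)$ cannot in general produce the lower bound. It yields $vx_i=va_i$ and (when $va_i=0$) $x_iv=a_iv$, so each variable contributes at most one value coset or one residue generator; for $n=1$ with, say, $\Gamma/vK$ noncyclic, or with both $\Gamma/vK$ and $k|Kv$ nontrivial, you run out of variables. The paper instead takes a single primitive element $b$ of a finite subextension $L'|K$ with $vL'=\Gamma$ and $L'v=k$, and sets $x_n=y+b$ with $vy>\mathrm{kras}(b,E)$ --- the Krasner constant, a much stronger requirement than $vy>vb$. This forces $b$ into the henselization of $E(x_n)$ (Lemma~3.13 of \cite{[K1]}), and since henselization is immediate, the single variable $x_n$ absorbs all of $\Gamma/vK$ and $k|Kv$ at once, while the remaining $n-1$ variables are pure immediate transcendentals. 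Without these two steps your argument does not close.
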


Theorem~\ref{MTai} can also be used to show that the statement from the
paragraph following Theorem~1.7 in \cite{[K1]} which we indicated in the
introduction is false: even if $(\tilde{K},v)$ admits an immediate
extension of transcendence degree $n$, the same is not necessarily true
for $(K,v)$. Indeed, take any maximal valued field $(K,v)$ that is not
separable-algebraically closed or real closed. Certainly, there is an
abundance of valued fields with residue characteristic 0 and of
Kaplansky fields that satisfy the stated conditions. Then $(K,v)$ is
henselian and its separable-algebraic closure is an infinite extension.
Hence by Theorem~\ref{MTai}, each maximal immediate extension of
$(\tilde{K},v)$ has infinite transcendence degree over $\tilde{K}$,
while $(K,v)$ itself does not have any proper immediate extensions.

\pars
While the question about a full converse is still open, the following
theorem settles the problem for a large class of valued fields which
includes valued fields of residue characteristic 0, Kaplansky fields and
tame fields. Note that the definition of the \bfind{implicit constant
field}, which in \cite{[K1]} was given for valued rational function
fields in one variable, can be generalized without problems; we take
$\ic(F|K,v)$ to be the relative algebraic closure of $K$ in a fixed
henselization of $(F,v)$.

\begin{theorem}                        \label{tame_rat_ff}
Take $p$ to be the
characteristic exponent of $Kv$. Assume that $vK$ is $p$-divisible and
$Kv$ is perfect. Further, take an ordered abelian group extension
$\Gamma$ of $vK$ such that $\Gamma/vK$ is a torsion group, and an
algebraic extension $k$ of $Kv$. Then there is an extension of $v$ from
$K$ to $F$ with $vF=\Gamma$ and $Fv=k$ if and only if at least one of
the two extensions $\Gamma|vK$ and $k|Kv$ is infinite or $(K,v)$ admits
an immediate extension of transcendence degree $n$. In this case, $\ic
(F|K,v)$ is a separable-algebraic extension $(L,v)$ of $(K,v)$ with
$vL=\Gamma$ and $Lv=k$, and it is infinite over the henselization of $K$
if $\Gamma/vK$ or $k|Kv$ is infinite.
\end{theorem}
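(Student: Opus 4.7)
The plan is to prove both implications by exploiting the tame hypothesis ($vK$ is $p$-divisible and $Kv$ is perfect) to build a distinguished separable-algebraic extension $(L,v)$ of $(K,v)$ realising $\Gamma$ and $k$, and to combine this with Theorems~\ref{MTsevvar}, \ref{MTai} and~\ref{gen1.1}. Because $vK$ is $p$-divisible, every element of the torsion group $\Gamma/vK$ has order coprime to~$p$; hence $\Gamma$ can be realised over $vK$ by adjoining to $K^h$ roots of polynomials $X^m-c$ with $c\in K^\times$ and $m$ coprime to~$p$ (Kummer-type tame extensions). Because $Kv$ is perfect, $k|Kv$ is separable and its generators Hensel-lift to separable-algebraic elements over $K^h$. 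Their compositum $L$ is a tame separable-algebraic extension of $K^h$; by the fundamental equality for tame extensions, $[L:K^h]=[\Gamma:vK]\cdot[k:Kv]$, so $L|K^h$ is finite exactly when both $\Gamma|vK$ and $k|Kv$ are.

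For sufficiency I would split on which hypothesis is in force. If $\Gamma|vK$ or $k|Kv$ is infinite, then $L|K^h$ is an infinite separable-algebraic extension, so Theorem~\ref{MTai} (applied to $L|K$) yields a maximal immediate extension of $(L,v)$ of infinite transcendence degree over~$L$, in particular one of transcendence degree at least~$n$. Choosing inside such an immediate extension a transcendence basis $x_1,\dots,x_n$ over $L$ as in the proof of Theorem~\ref{MTsevvar} (or invoking Theorem~\ref{gen1.1} when $\Gamma$ and $k$ happen to be finite), I would arrange $L\subseteq K(x_1,\dots,x_n)^h$, whence the restriction of $v$ to $F=K(x_1,\dots,x_n)$ has value group $\Gamma$ and residue field $k$. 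Under the other alternative, where $(K,v)$ admits an immediate extension $(N|K,v)$ of transcendence degree~$n$, the compositum $N\cdot L$ is immediate over $L$ with value group $\Gamma$, residue field $k$, and transcendence degree~$n$ over $L$, so the same construction applies.

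For necessity I assume $v$ extends from $K$ to $F$ with $vF=\Gamma$, $Fv=k$, and that both $\Gamma|vK$ and $k|Kv$ are finite; the goal is to produce an immediate extension of $(K,v)$ of transcendence degree~$n$. The key step is to identify $\ic(F|K,v)$ with the tame $L$ of the first paragraph. The inclusion $L\subseteq\ic(F|K,v)$ comes from embedding each tame generator of $L$ into $F^h$ by Hensel's lemma: for an $m$-th root of $c\in K^\times$, pick $\pi\in F$ with $v\pi=vc/m$ and apply Hensel to $X^m-c/\pi^m$; Hensel-lifts of elements of $k\subseteq Fv$ are analogous. The reverse inclusion uses the tame hypothesis to force every algebraic element of $F^h$ over $K^h$ into a tame extension with value group inside $\Gamma$ and residue field inside $k$, hence into $L$. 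Consequently $L\cdot F$ is an immediate extension of $L$ of transcendence degree~$n$; taking a transcendence basis $t_1,\dots,t_n$ of $L\cdot F$ over $L$ and rescaling each $t_i$ by an element of $L^\times$ so that $vt_i=0$ and $t_iv\in Kv$ (possible because $vL=\Gamma$ and $Lv=k$), the subfield $K(t_1,\dots,t_n)\subseteq L\cdot F$ furnishes the desired immediate extension of $(K,v)$. The closing structural claims of the theorem follow at once from the identification $\ic(F|K,v)=L$ together with the index formula above. I expect the main technical obstacle to be precisely this identification, and in particular the inclusion $\ic(F|K,v)\subseteq L$, which is where both the $p$-divisibility of $vK$ and the perfection of $Kv$ must be used in full to exclude non-tame algebraic contributions to $F^h$ over $K$.
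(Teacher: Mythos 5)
Your sufficiency argument is essentially the paper's (it simply cites parts A2) and B4) of Theorem~1.6 of \cite{[K1]}), and your identification of $\ic(F|K,v)$ with a tame extension $L$ of $K^h$ satisfying $vL=\Gamma$, $Lv=k$ and with $L\cdot F|L$ immediate is exactly the content of Lemma~\ref{rat_ff_IC}, which the paper imports from \cite{[B]}. The genuine gap is your final descent step in the necessity direction. You take a transcendence basis $t_1,\ldots,t_n$ of $L\cdot F$ over $L$, rescale so that $vt_i=0$ and $t_iv\in Kv$, and conclude that $(K(t_1,\ldots,t_n)|K,v)$ is immediate. This does not follow: immediacy of $L(t_1,\ldots,t_n)|L$ only gives $vK(t_1,\ldots,t_n)\subseteq\Gamma$ and $K(t_1,\ldots,t_n)v\subseteq k$, and a polynomial in the $t_i$ with coefficients in $K$ can, through cancellation of leading terms, acquire a value in $\Gamma\setminus vK$ or a residue in $k\setminus Kv$. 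This failure of immediacy to descend along algebraic extensions is precisely the difficulty the paper emphasizes: Section~\ref{sectext} poses as an open question whether an extension with properties (\ref{caseB4}) forces an immediate extension of $(K,v)$ of transcendence degree $n$, and the Laurent series example there shows that $(\tilde{K},v)$ having large immediate extensions says nothing about $(K,v)$. Your argument, if correct, would settle that open question without any hypothesis on $vK$ and $Kv$, which should have been a warning sign.

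What actually closes the gap in the paper is Theorem~\ref{almK_inftrdeg}, which you never invoke and which is where the hypotheses ``$vK$ is $p$-divisible and $Kv$ is perfect'' do their real work. The paper splits on the extension $L|K^h$: if it is infinite, a finite subextension of degree exceeding $(\Gamma:vK)[k:Kv]$ must have nontrivial defect, and part~2) of Theorem~\ref{almK_inftrdeg} then forces every maximal immediate extension of $(K^h,v)$ to have infinite transcendence degree; if $L|K^h$ is finite with defect, the same applies; and if $L|K^h$ is finite and defectless, part~1) (uniqueness of the maximal immediate extension) shows every maximal immediate extension of $L$ has transcendence degree at least $n$, while Lemma~2.5 of \cite{[K2]} shows that for a maximal immediate extension $M$ of $K^h$ the compositum $M.L$ is a maximal immediate extension of $L$, so $\trdeg M|K^h\geq\trdeg M.L|L\geq n$. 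That is, one descends the transcendence degree bound through maximal immediate extensions and defect theory, not by rescaling a transcendence basis. A smaller issue on the sufficiency side: your claim that $N\cdot L$ is immediate over $L$ when $N|K$ is immediate is also unjustified for the same reason, but it is unnecessary there since Theorem~\ref{MTsevvar}~B4) applies directly.
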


For the proof, which we will give in Section~\ref{sectproofs}, we will
use the following powerful theorem from \cite{[B]} (see also
\cite{[BK2]}):

\begin{theorem}                          \label{almK_inftrdeg}
Take a valued field $(K,v)$ of positive residue characteristic $p$,
with $p$-divisible value group and perfect residue field.
\sn
1) \ If $(K,v)$ admits a maximal immediate extension of finite
transcendence degree, then the maximal immediate extension
of $K$ is unique up to valuation preserving isomorphism.
\sn
2) \ If $(K,v)$ admits a finite separable-algebraic extension $(K',v)$
such that the valuation $v$ extends in a unique way from $K$ to $K'$
and $(K'|K,v)$ has nontrivial defect, then every maximal immediate
extension of $(K,v)$ is of infinite transcendence degree \mbox{over $K$.}
\end{theorem}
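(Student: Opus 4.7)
The plan is to tackle the two statements in reverse, using part~(2) as leverage for part~(1).

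For part~(2), the strategy is to manufacture an immediate extension of $(K,v)$ of infinite transcendence degree out of the single defect extension $(K'|K,v)$. Under the hypotheses that $vK$ is $p$-divisible and $Kv$ is perfect, a finite separable defect extension with unique extension of $v$ should, after reduction to a normal form, be presented through a generator satisfying an Artin--Schreier-type polynomial over $K$. From such a generator I would extract a pseudo-Cauchy sequence $(a_\nu)$ in $K$ of transcendental type, and adjoin its pseudo-limit $x_1$ inside a maximal immediate extension $(M,v)$. The resulting $(K(x_1),v)$ is then an immediate transcendental extension of $(K,v)$ whose value group remains $p$-divisible and whose residue field remains perfect. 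The key structural step is to show that the defect extension \emph{regenerates} over $K(x_1)$: i.e. that $(K'(x_1)|K(x_1),v)$ is still a finite separable extension with uniquely extending $v$ and nontrivial defect, so that the construction can be iterated. A straightforward induction then produces $x_1,x_2,\ldots$ algebraically independent over $K$, all inside the same maximal immediate extension.

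For part~(1), I would argue by contrapositive using part~(2) together with a Kaplansky-style uniqueness step. Assuming that a maximal immediate extension $(M,v)$ of $(K,v)$ has finite transcendence degree over $K$, part~(2) rules out any finite separable-algebraic extension of $K$ with uniquely extending $v$ and nontrivial defect; combined with the $p$-divisibility of $vK$ and the perfection of $Kv$, this should force $(K,v)$ into the scope of Kaplansky's classical uniqueness theorem, at least after henselization. Uniqueness of $M|K$ up to valuation-preserving isomorphism then follows by a transfinite induction along a transcendence basis of $M|K$: at each transcendental step, the generator corresponds (up to valuation-preserving equivalence) to the pseudo-limit of a pseudo-Cauchy sequence of transcendental type in the stage below, and Kaplansky's theorem yields uniqueness of the resulting simple immediate extension.

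The main obstacle, I expect, is the \textbf{regeneration} argument in part~(2). A single defect generator naturally produces only one pseudo-limit, so obtaining infinite transcendence degree requires showing that after adjoining the transcendental $x_1$, the original defect extension still gives rise to a fresh defect witness over $K(x_1)$ that is genuinely independent from $x_1$. This is a delicate issue of \emph{dependence versus independence of defect} in a transcendental immediate extension, and presumably exploits the fact that $p$-divisibility of $vK$ and perfection of $Kv$ are inherited by $K(x_1)$ in such a way that the Artin--Schreier normal form of the defect extension remains in force. A secondary difficulty in part~(1) is bridging from ``no finite separable defect'' to the Kaplansky condition on additive polynomials: $p$-divisibility of $vK$ and perfection of $Kv$ together with the absence of separable defect ought to suffice, but the verification requires care.
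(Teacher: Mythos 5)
You should first be aware that the paper does not prove this theorem at all: it is quoted as ``the following powerful theorem from \cite{[B]} (see also \cite{[BK2]})'', i.e.\ imported from the first author's thesis and a paper in preparation, and then used as a black box in the proof of Theorem~\ref{tame_rat_ff}. So there is no in-paper proof to compare against. Judged on its own merits, your sketch identifies the right circle of ideas (Artin--Schreier normal forms, pseudo-Cauchy sequences, Kaplansky) but leaves the two genuinely hard steps unproved, and in each case the step as you state it would fail.

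For part~2), the pseudo-Cauchy sequence $(a_\nu)$ in $K$ that approximates a defect generator $\vartheta$ of $K'|K$ is of \emph{algebraic} type, not transcendental type: $\vartheta$ is a pseudo-limit and its minimal polynomial does not have eventually fixed value along $(a_\nu)$. So you cannot ``extract a pseudo-Cauchy sequence of transcendental type'' from the defect generator; what you can do is pick a pseudo-limit $x_1$ of $(a_\nu)$ inside a given maximal immediate extension $M$, but then you must \emph{prove} that $x_1$ is transcendental over $K$, and your ``regeneration'' claim is in tension with Krasner-type arguments: since $v(\vartheta-x_1)\geq v(\vartheta-a_\nu)$ for all $\nu$, the element $\vartheta$ is approximated strictly better from $K(x_1)$ than from $K$, so the extension $K(x_1,\vartheta)|K(x_1)$ may lose its defect or collapse into the henselization rather than persist. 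Producing, at each stage, a \emph{new} defect witness whose pseudo-limit in $M$ is algebraically independent of the previous ones is exactly the content of the algebraic-independence results of \cite{[BK1]} and \cite{[B]} (which rest on the classification of Artin--Schreier defect extensions into dependent and independent ones from \cite{[K2]}); none of that is supplied by the sketch. For part~1), the proposed bridge to Kaplansky's uniqueness theorem does not work: Kaplansky's hypothesis~A demands that $Kv$ admit no finite separable extension of degree divisible by $p$ (every additive-plus-constant polynomial over $Kv$ has a root in $Kv$), which is strictly stronger than ``$vK$ is $p$-divisible and $Kv$ is perfect''. The conclusion of part~2) --- absence of finite separable defect extensions of $(K,v)$ --- is a condition on $K$, not on $Kv$, and does not imply hypothesis~A; a henselian defectless field with $Kv=\F_p$ and $vK=\Q$ satisfies all your intermediate conclusions yet lies outside the scope of Kaplansky's theorem. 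Part~1) is precisely a strengthening of Kaplansky's theorem beyond hypothesis~A, so it cannot be obtained by reducing to it.
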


%
%
%
\section{Homogeneous sequences}              \label{secthom}
In Section~5.1 of \cite{[K1]}, the notion of ``homogeneous
approximation'' was introduced. But the definition was incorrect, with
the consequence that Lemma~5.1 of~\cite{[K1]} does not hold for this
definition. The correct definition is as follows.

Let $(K,v)$ be any valued field and $a,b$ elements in some valued field
extension $(L,v)$ of $(K,v)$. We will say that $a$ is \bfind{strongly
homogeneous over $(K,v)$} if $a\in K\sep\setminus K$, the extension of
$v$ from $K$ to $K(a)$ is unique, and
\[
va\;=\;\mbox{\rm kras}(a,K)\>:=\> \max\{v(\tau a-\sigma a)\mid
\sigma,\tau\in \Gal K \mbox{\ \ and\ \ } \tau a\ne \sigma a\}
\;\in\>v\tilde{K}\;.
\]
%
%
We call $a\in L$ a \bfind{homogeneous approximation of $b$ over $K$} if
there is some $d\in K$ such that $a-d$ is strongly homogeneous over $K$
and $v(b-a)>v(b-d)\geq vb$. It then follows that $va=vb$ and
$v(a-d)=v(b-d)$. With this definition, Lemma~5.1 of \cite{[K1]} holds:

\begin{lemma}                               \label{krlkra}
If $a\in L$ is a homogeneous approximation of $b$ then $a$ lies in the
henselization of $K(b)$ w.r.t.\ every extension of the valuation $v$
from $K(a,b)$ to $\widetilde{K(b)}$.
\end{lemma}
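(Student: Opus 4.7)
The plan is to reduce the claim to Krasner's Lemma applied over the henselization $K(b)^h$, taken inside $\widetilde{K(b)}$ with respect to the given extension of $v$. Writing $\alpha:=a-d$ and $\beta:=b-d$, one observes that $K(\beta)=K(b)$ and that $d\in K\subseteq K(b)^h$, so it suffices to show $\alpha\in K(b)^h$, from which $a=\alpha+d\in K(b)^h$ follows. Strong homogeneity of $a-d$ guarantees $\alpha\in K\sep$, so $\alpha$ is separable algebraic over the henselian field $K(b)^h$, and Krasner's Lemma will yield $\alpha\in K(b)^h(\beta)=K(b)^h$ as soon as we have established
\[
v(\beta-\alpha)\>>\>v(\sigma\alpha-\alpha)
\]
for every non-trivial $K(b)^h$-conjugate $\sigma\alpha$ of $\alpha$.

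Verifying this inequality is the key step, and it should follow directly from the hypotheses on homogeneous approximations. The lower bound is immediate: from the definition of homogeneous approximation, $v(\beta-\alpha)=v(b-a)>v(b-d)=v(a-d)=v\alpha$. For the upper bound, note that every automorphism of $\widetilde{K(b)}$ fixing $K(b)^h$ restricts to an element of $\Gal K$, so the $K(b)^h$-conjugates of $\alpha$ form a subset of its $K$-conjugates. Specializing $\tau=\mathrm{id}$ in the definition of $\mathrm{kras}$, strong homogeneity then gives $v(\sigma\alpha-\alpha)\leq\mathrm{kras}(\alpha,K)=v\alpha$ for every such $\sigma$. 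Combining the two estimates yields $v(\beta-\alpha)>v\alpha\geq v(\sigma\alpha-\alpha)$, as required.

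The only genuine subtlety is ensuring that these valuation computations are consistent with the fixed extension of $v$ to $\widetilde{K(b)}$, since the Krasner estimate is being asserted in an ambient valued field that was not explicitly present when $\mathrm{kras}(a,K)$ was computed. This is precisely the role of the uniqueness-of-extension clause built into the definition of strong homogeneity: it forces $v\alpha$ and, more importantly, the bound $v(\sigma\alpha-\alpha)\leq v\alpha$ to be independent of the chosen extension of $v$ from $K$ to $\tilde{K}$, hence to remain valid inside $\widetilde{K(b)}$. Once this coherence is observed, the proof is a short and clean application of Krasner's Lemma, with strong homogeneity playing exactly the role of a uniform Krasner-radius control.
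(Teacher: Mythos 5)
Your proof is correct and follows essentially the same route as the paper: the paper also passes to $a-d$ and $b-d=$ a generator of $K(b)$ and then invokes Lemma~2.21 of \cite{[K1]}, which is exactly the Krasner-type statement (for an element with unique valuation extension and $v(b-a)>\mbox{\rm kras}(a,K)$) that you prove inline. Your verification of the key inequality $v(\beta-\alpha)>v\alpha=\mbox{\rm kras}(\alpha,K)\geq v(\sigma\alpha-\alpha)$ matches the hypotheses of that cited lemma, so the only difference is that you unfold its proof rather than citing it.
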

\begin{proof}
From Lemma~2.21 of \cite{[K1]} we obtain that $a-d$ and hence also
$a$ lies in the henselization of $K(b-d)=K(b)$ w.r.t.\ every extension
of the valuation $v$ from $K(a,b)$ to $\widetilde{K(b)}$.
\end{proof}

Lemmas~5.2 and~5.3 of~\cite{[K1]} remain unchanged:
\begin{lemma}                               \label{homup}
Let $(K',v)$ be any henselian extension field of $(K,v)$ such that
$a\notin K'$. If $a$ is homogeneous over $(K,v)$, then it is also
homogeneous over $(K',v)$, and $\mbox{\rm kras}(a,K)=\mbox{\rm kras}
(a,K')$. If $a$ is strongly homogeneous over $(K,v)$, then it is also
strongly homogeneous over $(K',v)$.
\end{lemma}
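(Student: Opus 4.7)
The plan is to compare the Galois orbits of $a$ over $K$ and over $K'$ inside a fixed algebraic closure of $K'$, and to let the henselianity of $K'$ supply the missing lower bound on $\mbox{\rm kras}(a,K')$. Since $a\in K\sep$ is separable over $K$, and hence over $K'$, and $a\notin K'$, its minimal polynomial over $K'$ is a separable divisor of its $K$-minimal polynomial of degree at least $2$. Thus $a\in (K')\sep\setminus K'$, and the set of $K'$-conjugates of $a$ is a subset of size at least two of the set of $K$-conjugates; in particular $\mbox{\rm kras}(a,K')$ is well defined.

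From this inclusion of conjugate sets I immediately obtain $\mbox{\rm kras}(a,K')\le\mbox{\rm kras}(a,K)$. The reverse inequality is the only step that requires the henselianity of $K'$. By henselianity, $v$ extends uniquely from $K'$ to its algebraic closure, so every $\sigma\in\Gal K'$ is a $v$-isometry, and in particular $v(\sigma a)=va$. Picking any $\sigma\in\Gal K'$ with $\sigma a\ne a$, the ultrametric triangle inequality forces
\[
v(\sigma a-a)\;\ge\;\min\{v(\sigma a),va\}\;=\;va,
\]
whence $\mbox{\rm kras}(a,K')\ge va$. Combining both bounds with the homogeneity hypothesis $va=\mbox{\rm kras}(a,K)$ yields $\mbox{\rm kras}(a,K')=va=\mbox{\rm kras}(a,K)$, which is both the claimed equality of Krasner constants and the statement that $a$ is homogeneous over $(K',v)$.

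For the strongly homogeneous version the only further condition to verify is uniqueness of the extension of $v$ from $K'$ to $K'(a)$, and this is automatic from henselianity of $K'$. The requirement $va\in v\tilde K$ is trivial because $a\in\tilde K$, and $a\in (K')\sep\setminus K'$ was already observed.

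The main obstacle is conceptually small but essential: the equality $v(\sigma a)=va$ for $\sigma\in\Gal K'$. Without henselianity of $K'$ distinct $K'$-conjugates of $a$ could carry different valuations and the ultrametric lower bound on $\mbox{\rm kras}(a,K')$ would collapse; with that one step in place the rest is just bookkeeping with the triangle inequality and the definitions.
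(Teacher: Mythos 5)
Your argument is correct: the inclusion of the $K'$-conjugates of $a$ among its $K$-conjugates gives $\mbox{\rm kras}(a,K')\le\mbox{\rm kras}(a,K)$, henselianity of $K'$ makes every $\sigma\in\Gal K'$ value-preserving so that the ultrametric inequality gives $\mbox{\rm kras}(a,K')\ge va$, and the remaining conditions for strong homogeneity ($a\in (K')\sep\setminus K'$ and uniqueness of the extension of $v$ to $K'(a)$) are immediate. The paper does not reprint a proof here --- it states that Lemma~5.2 of \cite{[K1]} is unchanged --- but your argument is exactly the standard one for this lemma, so there is nothing to add.
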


\begin{lemma}                              \label{lstronghom}
Suppose that $a\in\tilde{K}$ and that there is some extension of $v$
from $K$ to $K(a)$ such that if {\rm e} is the least positive integer
for which ${\rm e}va\in vK$, then
\sn
a) \ {\rm e} is not divisible by $\chara Kv$,\n
b) \ there exists some $c\in K$ such that $vca^{\rm e}=0$, $ca^{\rm e}v$
is separable-algebraic over $Kv$, and the degree of $ca^{\rm e}$ over
$K$ is equal to the degree {\rm f} of $ca^{\rm e}v$ over $Kv$.
\sn
Then $[K(a):K]={\rm ef}$ and if $a\notin K$, then $a$ is strongly
homogeneous over $(K,v)$.
\end{lemma}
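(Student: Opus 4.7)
The plan is to first pin down $[K(a):K]=ef$ together with uniqueness of the given extension of $v$ to $K(a)$, next derive separability of $a$ over $K$, and finally compute $\mbox{\rm kras}(a,K)=va$.

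For the degree I would combine two estimates. The fundamental inequality, together with $e\mid(vK(a):vK)$ (since $e$ is the order of $va$ modulo $vK$) and $f\mid[K(a)v:Kv]$ (since $K(a)v$ contains $ca^ev$, which has degree $f$ over $Kv$), yields $[K(a):K]\geq ef$. The reverse inequality comes from the tower $K\subseteq K(ca^e)\subseteq K(a)$: by hypothesis $[K(ca^e):K]=f$, and $a$ is a root of $X^e-a^e$ over $K(ca^e)$, forcing $[K(a):K(ca^e)]\leq e$. Equality throughout then saturates the full fundamental inequality $\sum_i e_i f_i\leq[K(a):K]$ taken over all extensions of $v|_K$ to $K(a)$, whence uniqueness of the given extension. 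Separability of $a$ over $K$ is immediate in residue characteristic $0$; in residue characteristic $p>0$ we have $p=\chara K$ and $p\nmid e$, so $X^e-a^e$ is separable over $K(a^e)$, while $K(ca^e)|K$ is separable by the standard fact that a finite valued field extension whose residue extension is separable of the same degree is itself separable. Hence $a\in K\sep$.

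Now assume $a\notin K$. Write $a=a_1,\ldots,a_{ef}$ for the $K$-conjugates of $a$ and set $b_i:=ca_i^e$. Uniqueness of the extension of $v$ to $K(a)$ forces $v(\sigma a)=va$ for every $\sigma\in\Gal K$ (since for any extension $w$ of $v|_K$ to $\tilde K$, both $w$ and $w\circ\sigma$ extend $v|_K$ and so restrict to the same extension on $K(a)$, yielding $w(\sigma a)=wa=va$), and hence $vb_i=0$ for all $i$. Fix a primitive $e$-th root of unity $\zeta\in\tilde K$; this exists because $\chara Kv\nmid e$. For two distinct conjugates $a_i,a_j$ I split into two cases. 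If $b_i\neq b_j$, then reducing the minimal polynomial $g$ of $b=ca^e$ modulo $v$ gives $gv=\prod_{k=1}^f(X-b_kv)$, and this must coincide with the minimal polynomial of $bv$ over $Kv$ (both are monic of degree $f$); so $b_1v,\ldots,b_fv$ are the $f$ distinct separable conjugates of $bv$, giving $v(b_i-b_j)=0$ and $v(a_i^e-a_j^e)=eva$. Factoring $a_i^e-a_j^e=\prod_{k=0}^{e-1}(a_i-\zeta^k a_j)$, each of the $e$ factors has value $\geq va$ (by the ultrametric inequality applied to $a_i$ and $\zeta^ka_j$, both of value $va$) and the values sum to $eva$, so every factor, in particular $a_i-a_j$, has value exactly $va$. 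If instead $b_i=b_j$ with $a_i\neq a_j$, then $a_i=\zeta^k a_j$ for some $1\leq k\leq e-1$; differentiating $X^e-1$ at $\zeta^k$ yields $e\zeta^{k(e-1)}=\prod_{\ell\neq k}(\zeta^k-\zeta^\ell)$, and taking values (with $ve=0$) gives $\sum_{\ell\neq k}v(\zeta^k-\zeta^\ell)=0$ with all summands nonnegative, forcing $v(\zeta^k-1)=0$ and hence $v(a_i-a_j)=va$ again. Thus every pair of distinct conjugates of $a$ has $v$-difference equal to $va$, so $\mbox{\rm kras}(a,K)=va$ and $a$ is strongly homogeneous.

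The main obstacle is the factorization-and-sum step: the bound $v(b_i-b_j)=0$ is exactly what converts the identity $a_i^e-a_j^e=\prod_k(a_i-\zeta^k a_j)$ into genuine control on the individual values $v(a_i-\zeta^k a_j)$. Establishing $v(b_i-b_j)=0$ in turn rests on identifying the reduction of the minimal polynomial of $b$ over $K$ with the minimal polynomial of $bv$ over $Kv$, which is where the sharp degree hypothesis in b) enters; the rest is bookkeeping.
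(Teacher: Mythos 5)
Your proof is correct and follows what is essentially the standard route (the present paper only quotes this lemma as unchanged from \cite{[K1]} and gives no proof here): the fundamental inequality combined with the tower $K\subseteq K(ca^{\rm e})\subseteq K(a)$ yields $[K(a):K]={\rm ef}$ together with uniqueness of the extension, and the factorization $a_i^{\rm e}-a_j^{\rm e}=\prod_{k}(a_i-\zeta^k a_j)$, fed by $v(b_i-b_j)=0$ from the separable reduction of the minimal polynomial of $ca^{\rm e}$, gives $\mbox{\rm kras}(a,K)=va$. One cosmetic slip: residue characteristic $p>0$ does not force $\chara K=p$ (mixed characteristic is possible), but there separability is automatic, so your argument is unaffected.
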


Lemma~5.4 of~\cite{[K1]} should read:
\begin{lemma}                               \label{exkrap}
Assume that $b$ is an element in some algebraically closed valued field
extension $(L,v)$ of $(K,v)$. Suppose that there is some ${\rm e}\in\N$
not divisible by $\chara Kv$, and some $c\in K$ such that $vcb^{\rm e}
=0$ and $cb^{\rm e}v$ is separable-algebraic over $Kv$. If the
smallest possible ${\rm e}\in\N$ is bigger than $1$ or if $cb^{\rm
e}v\notin Kv$, then we can find $a\in L$, strongly homogeneous over $K$
and such that $v(b-a)>vb$. In particular, $a$ is a homogeneous
approximation of $b$ over $K$.
\end{lemma}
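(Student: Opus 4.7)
The plan is to construct $a$ as a small multiplicative perturbation of $b$ so that $ca^{\rm e}$ becomes an exact root of the minimal polynomial of its residue, and then to invoke Lemma~\ref{lstronghom} to read off the conclusion. By the statement's reference to the ``smallest possible ${\rm e}$'', we may take ${\rm e}$ to be the least positive integer with ${\rm e}vb\in vK$; it is coprime to $\chara Kv$ because the hypothesis supplies some such coprime ${\rm e}$ and in an ordered abelian group the set $\{n\in\N:nvb\in vK\}$ is closed under $\gcd$. Set $\bar{\xi}:=cb^{\rm e}v$ and let $\bar{g}(X)\in Kv[X]$ be its minimal polynomial, of degree ${\rm f}=[Kv(\bar{\xi}):Kv]$, with $\bar{\xi}$ a simple root of $\bar{g}$ by separability. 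The standing hypothesis (``${\rm e}>1$ or $\bar{\xi}\notin Kv$'') is precisely that ${\rm ef}>1$, which will force $a\notin K$.

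Lift $\bar{g}$ to a monic $g(X)\in\mathcal{O}_K[X]$ of the same degree. Since $(L,v)$ is algebraically closed and hence henselian, $g$ splits in $L$ and exactly one of its roots, call it $\eta$, reduces to $\bar{\xi}$. Combining $\deg g={\rm f}$ with the fundamental residue-degree bound $[K(\eta):K]\geq[Kv(\bar{\xi}):Kv]={\rm f}$ forces $[K(\eta):K]={\rm f}$. Put $\xi:=cb^{\rm e}$, so $\xi v=\bar{\xi}=\eta v$, whence $v(\eta/\xi)=0$ and $(\eta/\xi)v=1$. The polynomial $X^{\rm e}-\eta/\xi$ therefore reduces to $X^{\rm e}-1$, of which $1$ is a simple root because ${\rm e}$ is not divisible by $\chara Kv$; Hensel's lemma in $(L,v)$ produces $\mu\in L$ with $\mu^{\rm e}=\eta/\xi$ and $\mu v=1$. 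Define $a:=b\mu$.

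Then $va=vb$, $v(a-b)=vb+v(\mu-1)>vb$, and $ca^{\rm e}=\xi\cdot(\eta/\xi)=\eta$. Hence $a\in\tilde{K}$, $vca^{\rm e}=0$, $ca^{\rm e}v=\bar{\xi}$ is separable over $Kv$, and $[K(ca^{\rm e}):K]=[K(\eta):K]={\rm f}$. Moreover, $va=vb$ keeps ${\rm e}$ as the least positive integer with ${\rm e}va\in vK$, still coprime to $\chara Kv$. All conditions of Lemma~\ref{lstronghom} (applied to $a$ with the same constant $c$) are met, yielding $[K(a):K]={\rm ef}>1$, so $a\notin K$ and $a$ is strongly homogeneous over $(K,v)$. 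Finally, taking $d:=0\in K$ gives $a-d=a$ strongly homogeneous over $K$ together with $v(b-a)>vb=v(b-d)\geq vb$, so $a$ is a homogeneous approximation of $b$ over $K$. The one delicate step is the identification $[K(\eta):K]={\rm f}$, which combines simplicity of $\bar{\xi}$ as a root of $\bar{g}$ with the residue-degree inequality; the remaining verifications are routine bookkeeping with residues.
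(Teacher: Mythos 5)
Your proposal is correct and follows essentially the same route as the paper's own proof: lift the minimal polynomial of $cb^{\rm e}v$, take the root $\eta$ (the paper's $a_0$) with the right residue, extract an ${\rm e}$-th root $\mu$ (the paper's $a_1$) of $\eta/cb^{\rm e}$ with residue $1$, set $a=\mu b$, and conclude via Lemma~\ref{lstronghom}. Your additional bookkeeping (minimality of ${\rm e}$ via the gcd argument, the degree count $[K(\eta):K]={\rm f}$, and the choice $d=0$ for the homogeneous approximation) only makes explicit what the paper leaves implicit.
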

\begin{proof}
Take a monic polynomial $g$ over $K$ with $v$-integral coefficients
whose reduction modulo $v$ is the minimal polynomial of $cb^{\rm e}v$
over $Kv$. Then let $a_0\in\tilde{K}$ be the root of $g$ whose residue
is $cb^{\rm e}v$. The degree of $a_0$ over $K$ is the same as that of
$cb^{\rm e}v$ over $Kv$. We have that $v(\frac{a_0}{cb^{\rm e}}-1)>0$.
So there exists $a_1\in \tilde{K}$ with residue $1$ and such that
$a_1^{\rm e}= \frac{a_0}{cb^{\rm e}}$. Then for $a:=a_1 b$, we find that
$v(a-b)=vb+ v(a_1-1)>vb$ and $ca^{\rm e}=a_0\,$. It follows that $va=vb$
and $ca^{\rm e}v=cb^{\rm e}v$. By the foregoing lemma, this shows that
$a$ is strongly homogeneous over $K$.
\end{proof}

The definition of homogeneous sequences and Lemma~5.5 of~\cite{[K1]}
remain unchanged:

Let $(K(x)|K,v)$ be any extension of valued fields. We fix an extension
of $v$ to $\widetilde{K(x)}$.
Let $S$ be an initial segment of $\N$, that is, $S=\N$ or
$S=\{1,\ldots,n\}$ for some $n\in\N$ or $S=\emptyset$. A sequence
\[{\eu S}\;:=\;(a_i)_{i\in S}\]
of elements in $\tilde{K}$  will be called a \bfind{homogeneous sequence
for $x$} if the following conditions are satisfied for all $i\in S$
(where we set $a_0:=0$):
\sn
{\bf (HS)} \ \ $a_i-a_{i-1}$ is a homogeneous approximation of
$x-a_{i-1}$ over $K(a_0,\ldots,a_{i-1})$.
\sn
Recall that then by the definition of ``strongly homogeneous'',
$a_i\notin K(a_0,\ldots,a_{i-1})^h$. We call $S$ the \bfind{support} of
the sequence $\eu S$. We set
\[
K_{\eu S}\;:=\;K(a_i\mid i\in S)\;.
\]
If ${\eu S}$ is the empty sequence, then $K_{\eu S}=K$.

\begin{lemma}                               \label{kspcs}
If $i,j\in S$ with $1\leq i<j$, then
\begin{equation}                            \label{pcs}
v(x-a_j)\> >\>v(x-a_i)\>=\>v(a_{i+1}-a_i)\;.
\end{equation}
If $S=\N$ then $(a_i)_{i\in S}$ is a pseudo Cauchy sequence in
$K_{\eu S}$ with pseudo limit $x$.
\end{lemma}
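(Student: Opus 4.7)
The plan is to unpack condition (HS) at each step and then combine the resulting inequalities. Fix $i \in S$ with $i \geq 1$. By (HS), $a_i - a_{i-1}$ is a homogeneous approximation of $x - a_{i-1}$ over $K(a_0,\ldots,a_{i-1})$, so by the definition of homogeneous approximation (and the remark that follows it in the preceding section) I immediately get
\[
v(x - a_i) \;>\; v(x - a_{i-1}) \qquad \text{and} \qquad v(a_i - a_{i-1}) \;=\; v(x - a_{i-1}).
\]
Applying the second equality with $i$ replaced by $i+1$ (allowed whenever $i+1 \in S$) yields $v(a_{i+1} - a_i) = v(x - a_i)$, which is the equality in (\ref{pcs}).

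For the strict inequality $v(x - a_j) > v(x - a_i)$ when $i < j$, I proceed by a trivial induction on $j - i$: the step $v(x - a_{k}) > v(x - a_{k-1})$ is exactly what the first part of the unpacked definition gives, and transitivity of $>$ in the value group does the rest. This establishes (\ref{pcs}) in full.

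Now assume $S = \N$. I first claim that $v(a_j - a_i) = v(x - a_i)$ for $i < j$. Write $a_j - a_i = (x - a_i) - (x - a_j)$; since $v(x - a_j) > v(x - a_i)$ by (\ref{pcs}), the ultrametric inequality forces equality on the smaller-valued term, giving the claim. Consequently, for $i < j < k$,
\[
v(a_k - a_j) \;=\; v(x - a_j) \;>\; v(x - a_i) \;=\; v(a_j - a_i),
\]
which is the defining condition for $(a_i)_{i \in \N}$ to be a pseudo Cauchy sequence in $K_{\eu S}$. Moreover, the equality $v(x - a_i) = v(a_{i+1} - a_i)$ already proved is precisely the condition for $x$ to be a pseudo limit of this sequence.

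There is no real obstacle here; the whole lemma is a bookkeeping consequence of the two ``it then follows'' clauses in the definition of homogeneous approximation combined with the ultrametric inequality, so the proof writeup should be short.
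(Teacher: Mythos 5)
Your proof is correct and follows the standard route: unpacking the two consequences of the definition of homogeneous approximation ($v(x-a_i)>v(x-a_{i-1})$ and $v(a_i-a_{i-1})=v(x-a_{i-1})$) and then applying the ultrametric inequality to get the pseudo Cauchy property and pseudo limit. The present paper omits this proof as unchanged from \cite{[K1]}, and your argument is essentially the one intended there, so there is nothing to add.
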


Lemma~5.6 of~\cite{[K1]} now reads:
\begin{lemma}                               \label{xx}
Take $x,x'\in L$.\n
1) \ If $a\in L$ is a homogeneous approximation of $x$ over $K$ and if
$v(x-x')\geq v(x-a)$, then $a$ is also a homogeneous approximation of
$x'$ over $K$.
\sn
2) \ Assume that $(a_i)_{i\in S}$ is a homogeneous sequence for $x$ over
$K$. If $v(x-x')>v(x-a_k)$ for all $k\in S$, then $(a_i)_{i\in S}$ is
also a homogeneous sequence for $x'$ over $K$.

In particular, for each $k\in S$ such that $k>1$, $(a_i)_{i<k}$ is a
homogeneous sequence for $a_k$ over $K$.
\end{lemma}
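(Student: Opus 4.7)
The plan is to unwind the definitions and apply the ultrametric triangle inequality carefully.

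For part 1), I would keep the same witness $d \in K$ used for $x$: by hypothesis, $a-d$ is strongly homogeneous over $K$ and $v(x-a) > v(x-d) \geq vx$. I claim this same $d$ witnesses that $a$ is a homogeneous approximation of $x'$. From $v(x-x') \geq v(x-a) > v(x-d)$ and the strict ultrametric triangle inequality applied to $x'-d = (x-d) - (x-x')$, one gets $v(x'-d) = v(x-d)$; the same chain gives $v(x-x') > vx$, hence $vx' = vx$. Then
\[
v(x'-a) \;\geq\; \min\{v(x-x'), v(x-a)\} \;=\; v(x-a) \;>\; v(x-d) \;=\; v(x'-d),
\]
and $v(x'-d) = v(x-d) \geq vx = vx'$, which gives exactly the required inequalities. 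Since $a-d$ is unchanged, strong homogeneity over $K$ is automatic.

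For part 2), I would apply part 1) at each $i \in S$, taking as new base field $K_i := K(a_0,\ldots,a_{i-1})$ and as new triple $(x - a_{i-1},\, x' - a_{i-1},\, a_i - a_{i-1})$. Condition (HS) for the original sequence gives that $a_i - a_{i-1}$ is a homogeneous approximation of $x - a_{i-1}$ over $K_i$. The hypothesis $v(x-x') > v(x-a_i)$ rewrites as
\[
v\bigl((x-a_{i-1}) - (x'-a_{i-1})\bigr) \;\geq\; v\bigl((x-a_{i-1}) - (a_i - a_{i-1})\bigr),
\]
which is precisely the hypothesis needed to invoke part 1) at stage $i$. So (HS) transfers to $x'$, establishing that $(a_i)_{i \in S}$ is a homogeneous sequence for $x'$.

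For the ``in particular'' assertion, fix $k \in S$ with $k > 1$ and apply part 2) with $x' := a_k$ to the shortened sequence $(a_i)_{i < k}$ having support $\{1,\ldots,k-1\}$. The required inequality $v(x - a_k) > v(x - a_i)$ for each $i < k$ is supplied directly by Lemma~\ref{kspcs} applied to the original sequence. I expect no real obstacle in this proof; the only point requiring attention is the third clause $v(x'-d) \geq vx'$ in the definition of homogeneous approximation, which forces one to extract $vx' = vx$ from the chain $v(x-x') \geq v(x-a) > v(x-d) \geq vx$ before concluding.
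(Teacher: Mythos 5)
Your proof of part 1) is essentially identical to the paper's: keep the same witness $d$, derive $v(x'-d)=v(x-d)$ and $v(x'-a)\geq v(x-a)>v(x'-d)$ from the ultrametric inequality, and extract $vx'=vx$ from $v(x-x')>v(x-d)\geq vx$. The paper omits the proofs of part 2) and the ``in particular'' clause (they are unchanged from \cite{[K1]}), but your stage-by-stage application of part 1) over $K(a_0,\ldots,a_{i-1})$ and your use of Lemma~\ref{kspcs} are exactly the intended arguments, so the proposal is correct.
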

\begin{proof}
Only the proof of part 1) changes:
\sn
Suppose that $a$ is a homogeneous approximation of $x$ over $K$,
with $v(x-a)>v(x-d)\geq vx$ and $a-d$ strongly homogeneous over $K$. If
in addition $v(x-x')\geq v(x-a)>v(x-d)$, then $v(x'-d)=\min\{v(x-x'),
v(x-d)\} =v(x-d)$ and $v(x'-a)\geq \min\{v(x-x'),v(x-a)\}\geq v(x-a)>
v(x-d)=v(x'-d)$. Furthermore, $v(x-x')>v(x-d)\geq vx$, hence $vx=vx'$
and $v(x'-d)\geq vx'$. This yields the first assertion.
\end{proof}

The remainder of Section~5.2 of~\cite{[K1]} remains unchanged, but for
the convenience of the reader, we repeat the results here without proof:

\begin{lemma}                               \label{KSinL}
Assume that $(a_i)_{i\in S}$ is a homogeneous sequence for $x$ over $K$.
Then
\begin{equation}                            \label{KSin}
K_{\eu S} \>\subset\> K(x)^h \;.
\end{equation}
For every $n\in S$, $a_1,\ldots,a_n\in K(a_n)^h$. If
$S=\{1,\ldots,n\}$, then
\begin{equation}                            \label{in}
K_{\eu S}^h \>=\> K(a_n)^h\;.
\end{equation}
\end{lemma}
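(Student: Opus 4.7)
The plan is to prove all three assertions by reducing to Lemma~\ref{krlkra} and then using the observation in Lemma~\ref{xx} that truncations of homogeneous sequences are themselves homogeneous sequences for the later entries.

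First, I would establish (\ref{KSin}) by induction on $i \in S$, showing that $a_i \in K(x)^h$. The base case $i = 0$ is trivial since $a_0 = 0$. For the inductive step, assume $K(a_0,\ldots,a_{i-1}) \subset K(x)^h$. By condition (HS), $a_i - a_{i-1}$ is a homogeneous approximation of $x - a_{i-1}$ over $K(a_0,\ldots,a_{i-1})$, so Lemma~\ref{krlkra} (applied with base field $K(a_0,\ldots,a_{i-1})$, $b = x - a_{i-1}$, $a = a_i - a_{i-1}$) yields that $a_i - a_{i-1}$ lies in the henselization of $K(a_0,\ldots,a_{i-1})(x - a_{i-1}) = K(a_0,\ldots,a_{i-1})(x)$ inside $\widetilde{K(x)}$ with respect to the fixed extension of $v$. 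Since $K(a_0,\ldots,a_{i-1}) \subset K(x)^h$ by the inductive hypothesis and $K(x)^h$ is henselian, this henselization is contained in $K(x)^h$. Hence $a_i \in K(x)^h$, completing the induction.

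Next, for the claim that $a_1,\ldots,a_n \in K(a_n)^h$ for every $n \in S$, I would invoke the last assertion of Lemma~\ref{xx}: the truncated sequence $(a_i)_{i < n}$ is itself a homogeneous sequence for $a_n$ over $K$. Applying (\ref{KSin}) (which I have just proved) to this truncated sequence with $a_n$ in place of $x$ gives $K(a_1,\ldots,a_{n-1}) \subset K(a_n)^h$, as desired.

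Finally, (\ref{in}) follows by combining the two inclusions. The containment $K(a_n)^h \subset K_{\eu S}^h$ is immediate since $a_n \in K_{\eu S}$. For the converse, the previous paragraph (applied with $S = \{1,\ldots,n\}$) gives $a_1,\ldots,a_{n-1} \in K(a_n)^h$, and since trivially $a_n \in K(a_n)^h$, we obtain $K_{\eu S} = K(a_1,\ldots,a_n) \subset K(a_n)^h$; passing to henselizations yields $K_{\eu S}^h \subset K(a_n)^h$. There is no real obstacle here beyond bookkeeping: the only point requiring care is that all henselizations must be formed inside the fixed algebraic closure $\widetilde{K(x)}$ with respect to the fixed extension of $v$, so that Lemma~\ref{krlkra} may be applied unambiguously at each inductive step.
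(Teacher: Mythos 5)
Your proof is correct and follows the intended argument: the paper omits the proof (deferring to \cite{[K1]}), but the standard proof there is exactly this induction via Lemma~\ref{krlkra} for (\ref{KSin}), followed by the truncation observation from Lemma~\ref{xx} applied with $a_n$ in place of $x$ to get $a_1,\ldots,a_n\in K(a_n)^h$ and hence (\ref{in}). Your care about forming all henselizations inside the fixed $\widetilde{K(x)}$ with the fixed extension of $v$ is exactly the right bookkeeping point.
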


\begin{proposition}                         \label{SNpure}
Assume that ${\eu S}=(a_i)_{i\in S}$ is a homogeneous sequence for
$x$ over $K$ with support $S=\N$. Then $(a_i)_{i\in\N}$ is a pseudo
Cauchy sequence of transcendental type in $(K_{\eu S},v)$ with pseudo
limit $x$, and $(K_{\eu S}(x)|K_{\eu S},v)$ is immediate and pure.
\end{proposition}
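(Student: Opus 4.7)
The plan is to address the three conclusions in order. The pseudo Cauchy property and the fact that $x$ is a pseudo limit are immediate from Lemma \ref{kspcs} with $S=\N$: the sequence $\gamma_i := v(x-a_i) = v(a_{i+1}-a_i)$ is strictly increasing in $vK_{\eu S}$, so $(a_i)_{i \in \N}$ pseudo converges to $x$ inside $K_{\eu S}$. What remains is transcendental type, immediacy, and purity.

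The central step is transcendental type, which I would establish by contradiction. Assume the sequence is of algebraic type. Then classical Kaplansky theory produces a polynomial $g \in K_{\eu S}[X]$ of minimal positive degree $m$ with $v(g(a_i))$ cofinally increasing, and since $x$ is a pseudo limit one obtains $g(x)=0$, so $x$ is algebraic over $K_n$ of degree $\leq m$ for some $n$ with $g \in K_n[X]$. For each $k\geq n$, Lemma \ref{krlkra} applied to the homogeneous approximation $a_{k+1}-a_k$ of $x-a_k$ over $K_k$ places $a_{k+1}\in K_k(x)^h$, hence $K_{k+1} \subseteq K_k(x)^h$ and, iterating inside the common henselian extension $K(x)^h$, $K_{\eu S}^h \subseteq K_n(x)^h$. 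But $[K_n(x)^h:K_n^h]\leq m$ is bounded, while $[K_{k+1}^h:K_k^h]=[K_{k+1}:K_k]\geq 2$ follows from the strong-homogeneity clauses $a_{k+1}\notin K_k^h$ together with the uniqueness of the extension of $v$ to $K_k(a_{k+1})$; thus $[K_{n+r}^h:K_n^h]\geq 2^r\to\infty$, a contradiction.

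Transcendental type then yields immediacy of $(K_{\eu S}(x)|K_{\eu S},v)$ by Kaplansky's theorem on pseudo-convergent sequences of transcendental type. For purity, the inclusion $K_{\eu S}\subset K(x)^h$ of Lemma \ref{KSinL} lets one realize a henselization $K_{\eu S}(x)^h$ inside $K(x)^h$; any element $y$ of the implicit constant field $\ic(K_{\eu S}(x)|K_{\eu S},v)$ is algebraic over $K_{\eu S}$ with the extension of $v$ to $K_{\eu S}(y)$ unique (as $K_{\eu S}(y)$ sits inside the henselian $K_{\eu S}(x)^h$), so $y\in K_{\eu S}^h$. Hence $\ic(K_{\eu S}(x)|K_{\eu S},v)=K_{\eu S}^h$, which is exactly purity.

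The main obstacle is the transcendental-type step, which fuses the Krasner content of Lemma \ref{krlkra} with the strict $K_k^h$-exclusion built into strong homogeneity and converts it into a degree contradiction. The subtle input is verifying $g(x)=0$ for the pseudo limit $x$, which rests on the fact that $v(x-a_i)=\gamma_i$ grows cofinally through the breadth of the sequence, so that any polynomial $g$ for which $v(g(a_i))$ is cofinally increasing must vanish at $x$.
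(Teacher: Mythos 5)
There is a genuine gap at the heart of your transcendental-type argument: the claim that ``since $x$ is a pseudo limit one obtains $g(x)=0$'' is false. If $(a_i)_{i\in\N}$ is of algebraic type and $g$ is a polynomial of minimal degree with $v(g(a_i))$ eventually strictly increasing, all one can conclude for a pseudo limit $x$ is that $v(g(x))\geq v(g(a_i))$ for large $i$; since the values $v(g(a_i))$ need not be cofinal in the value group, this does not force $g(x)=0$. (A pseudo Cauchy sequence of algebraic type can perfectly well have transcendental pseudo limits: take a sequence in $K$ pseudo-converging to some $c\in K$, so that $g=X-c$ witnesses algebraic type, and then any $x$ with $v(x-c)$ larger than all $v(a_i-c)$ is also a pseudo limit.) Your appeal to the ``breadth'' does not repair this, because nothing in the hypotheses makes the values $\gamma_i$ cofinal in $vK_{\eu S}(x)$. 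The correct route is Kaplansky's theorem on algebraic-type sequences: it produces a \emph{root} $z$ of $g$ which is itself a pseudo limit of $(a_i)_{i\in\N}$. Since $v(x-z)\geq\gamma_i$ for all $i$, and the $\gamma_i$ are strictly increasing, Lemma~\ref{xx}(2) shows that $(a_i)_{i\in\N}$ is also a homogeneous sequence for $z$ over the relevant base field, and then Lemma~\ref{krlkra} (equivalently, Lemma~\ref{KSinL} applied to $z$) gives $K_{\eu S}\subseteq K_n(z)^h$ for suitable $n$, a finite extension of $K_n^h$; this contradicts $[K_{k+1}^h:K_k^h]\geq 2$ exactly as in your degree count. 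So your contradiction machinery is the right one, but it must be run on the algebraic pseudo limit $z$ supplied by Kaplansky, not on $x$ itself.

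A secondary point: your purity argument conflates the \emph{definition} of a pure extension with the \emph{consequence} $\ic(K_{\eu S}(x)|K_{\eu S},v)=K_{\eu S}^h$, which is the content of Theorem~\ref{thIC} rather than the definition. Once transcendental type is established, purity is immediate because ``$x$ is a pseudo limit of a pseudo Cauchy sequence of transcendental type in $K_{\eu S}$'' is literally one of the clauses in the definition of ``pure in $x$''; no argument about the implicit constant field is needed (and immediacy then follows from Kaplansky's theorem for transcendental-type sequences, as you say).
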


This proposition leads to the following definition. A homogeneous
sequence $\eu S$ for $x$ over $K$ will be called \bfind{(weakly) pure
homogeneous sequence} if $(K_{\eu S}(x)|K_{\eu S},v)$ is (weakly) pure
in $x$. Hence if $S=\N$, then $\eu S$ is always a pure homogeneous
sequence. The empty sequence is a (weakly) pure homogeneous sequence for
$x$ over $K$ if and only if already $(K(x)|K,v)$ is (weakly) pure in
$x$.

\begin{theorem}                             \label{thIC}
Suppose that $\eu S$ is a (weakly) pure homogeneous sequence for $x$
over $K$. Then
\[K_{\eu S}^h \;=\; \ic (K(x)|K,v)\;.\]
Further, $K_{\eu S}v$ is the relative algebraic closure of $Kv$ in
$K(x)v$, and the torsion subgroup of $vK(x)/vK_{\eu S}$ is finite.
If $\eu S$ is pure, then $vK_{\eu S}$ is the relative divisible
closure of $vK$ in $vK(x)$.
\end{theorem}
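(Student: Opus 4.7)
The plan is to prove the identity $K_{\eu S}^h = \ic(K(x)|K,v)$ first, and then read off the three remaining assertions from the known structure of the (weakly) pure simple extension $(K_{\eu S}(x)|K_{\eu S},v)$. For the inclusion $K_{\eu S}^h \subseteq \ic(K(x)|K,v)$, Lemma~\ref{KSinL} gives $K_{\eu S} \subseteq K(x)^h$; since $K_{\eu S}|K$ is algebraic (each $a_i \in \tilde K$), the henselization $K_{\eu S}^h$ taken inside $K(x)^h$ is an algebraic subextension of $K(x)^h$ over $K$, and hence lies in the relative algebraic closure $\ic(K(x)|K,v)$. For the reverse inclusion, I would observe that $K(x)^h$ is itself a henselization of $K_{\eu S}(x)$ (as $K_{\eu S}(x)|K(x)$ is algebraic and $K(x)^h$ is henselian containing $K_{\eu S}(x)$); any $c \in \ic(K(x)|K,v)$ is algebraic over $K$, a fortiori over $K_{\eu S}$, and lies in $K(x)^h$, so $c \in \ic(K_{\eu S}(x)|K_{\eu S},v)$. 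The defining property of a (weakly) pure homogeneous sequence says precisely that $(K_{\eu S}(x)|K_{\eu S},v)$ is (weakly) pure in $x$, and so its implicit constant field equals $K_{\eu S}^h$, closing the inclusion.

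For the statements about residue fields and value groups, the inclusion $K_{\eu S}(x) \subseteq K(x)^h$ forces $vK_{\eu S}(x) = vK(x)$ and $K_{\eu S}(x)v = K(x)v$, which reduces each claim to the corresponding structural property of a (weakly) pure simple extension. Weak purity guarantees that $K_{\eu S}(x)v$ admits no nontrivial algebraic subextension over $K_{\eu S}v$ and that the torsion subgroup of $vK_{\eu S}(x)/vK_{\eu S}$ is finite; combined with $K_{\eu S}v|Kv$ being algebraic (as $K_{\eu S}|K$ is), this gives both the description of the relative algebraic closure of $Kv$ in $K(x)v$ and the finiteness of the torsion in $vK(x)/vK_{\eu S}$. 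For the last assertion, an induction on the support of $\eu S$ using Lemma~\ref{lstronghom} at each step shows that $vK_{\eu S}/vK$ is a torsion group: each strongly homogeneous step contributes $v(a_i-a_{i-1})$ of finite order modulo $vK(a_0,\ldots,a_{i-1})$, and the fundamental equality forces $vK(a_0,\ldots,a_i)/vK(a_0,\ldots,a_{i-1})$ to be torsion. If in addition $\eu S$ is pure, then $vK_{\eu S}(x)/vK_{\eu S}$ is torsion-free, so $vK_{\eu S}$ sits inside $vK(x)$ as exactly the relative divisible closure of $vK$.

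I expect the main obstacle to be bookkeeping with the precise definition of (weak) purity rather than any deep computation: one must verify that it indeed pins down $\ic(K_{\eu S}(x)|K_{\eu S},v) = K_{\eu S}^h$ and that it dictates the torsion behaviour distinguishing weak purity from purity at the level of the value group. The case $S = \N$ is handled automatically by Proposition~\ref{SNpure}, which delivers immediateness and hence purity of $(K_{\eu S}(x)|K_{\eu S},v)$; the finite-support and empty-support cases then follow uniformly by the argument above.
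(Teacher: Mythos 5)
This paper does not contain its own proof of this theorem: it is listed among the results of Section~5.2 of \cite{[K1]} that ``remain unchanged'' and are explicitly repeated without proof, so the only comparison available is with the argument in \cite{[K1]}. Your proof is correct and is essentially that argument: Lemma~\ref{KSinL} gives $K_{\eu S}\subseteq K(x)^h$, hence $K(x)^h=K_{\eu S}(x)^h$ together with $vK(x)=vK_{\eu S}(x)$ and $K(x)v=K_{\eu S}(x)v$, and all of the assertions then reduce, using that $K_{\eu S}|K$ is algebraic, to the properties of the (weakly) pure simple extension $(K_{\eu S}(x)|K_{\eu S},v)$ established in Section~3 of \cite{[K1]} (implicit constant field equal to the henselization, no new algebraic residue elements, finite torsion in the value-group quotient, and torsion-freeness of $vK_{\eu S}(x)/vK_{\eu S}$ in the pure case).
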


\pars
In Section~5.2 of~\cite{[K1]}, Proposition~5.10 remains unchanged:
\begin{proposition}                         \label{hit}
Suppose that $(K,v)$ is henselian. If $a$ is homogeneous over $(K,v)$,
then $(K(a)|K,v)$ is a tame extension. If $\eu S$ is a homogeneous
sequence over $(K,v)$, then $K_{\eu S}$ is a tame extension of $K$.
\end{proposition}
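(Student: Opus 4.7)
The plan is to handle the two assertions in turn, with the second following from the first by a tower-of-extensions argument.

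For the first assertion, let $a$ be homogeneous over the henselian field $(K,v)$, so that $a \in K\sep \setminus K$ and $va = \mbox{\rm kras}(a,K)$; henselianity makes the extension of $v$ to $K(a)$ automatically unique, so $a$ is in fact strongly homogeneous. The goal is to recover the structural picture of Lemma~\ref{lstronghom}: an integer $e$, not divisible by $p := \chara Kv$, with $eva \in vK$, and an element $c \in K$ with $vca^e = 0$ such that $ca^ev$ is separable algebraic over $Kv$ of degree $f$, and $[K(a):K] = ef$. First I would use henselianity (via the $\Gal K$-action) to see that all conjugates of $a$ share the value $va$, and that $va$ is exactly the largest pairwise distance among conjugates. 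Taking $e$ minimal with $eva \in vK$, I would then argue that $p \nmid e$: if $p \mid e$, a $p$-th root manipulation would produce conjugates at distance strictly greater than $va$, violating the Krasner equality. Choosing $c \in K$ with $vca^e = 0$, the residue $ca^ev$ would be forced to be separable over $Kv$, and a degree count would give $[K(a):K] = ef$. This delivers the three defining properties of a tame extension: $\chara Kv \nmid [vK(a):vK] = e$, the residue extension $K(a)v|Kv$ is separable, and the extension is defectless.

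For the second assertion, let $\eu S = (a_i)_{i \in S}$ be a homogeneous sequence over $(K,v)$. By condition (HS), for each $i \in S$ there is a $d_i \in K(a_0,\ldots,a_{i-1})$ such that $(a_i - a_{i-1}) - d_i$ is strongly homogeneous over $K(a_0,\ldots,a_{i-1})$. By Lemma~\ref{homup}, this element remains strongly homogeneous over the henselization $K(a_0,\ldots,a_{i-1})^h$. Applying the first assertion inside that henselian field yields that the step $K(a_0,\ldots,a_i)^h \mid K(a_0,\ldots,a_{i-1})^h$ is tame. Since tame extensions form a distinguished class closed under composita, towers, and directed unions, the (possibly infinite) tower assembled from these steps shows that $K_{\eu S}$ is tame over $K$.

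The main obstacle is the first assertion: extracting the concrete decomposition of $K(a)|K$ into a tamely ramified part of degree $e$ and a separable residue part of degree $f$ from only the abstract Krasner equality $va = \mbox{\rm kras}(a,K)$. The crucial technical point is proving $p \nmid e$, which requires a careful conjugate-comparison argument exploiting henselianity to keep the Krasner-type estimates consistent under passage to $p$-th roots and to intermediate fields.
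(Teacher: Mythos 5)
Your tower argument for the second assertion is fine: with $K$ henselian each $K(a_0,\ldots,a_{i-1})$ is again henselian, the increment $(a_i-a_{i-1})-d_i$ is strongly homogeneous over it by (HS), and tame extensions of henselian fields are closed under towers and unions. So everything hinges on the first assertion, and there your argument has a genuine gap. You propose to verify the hypotheses of Lemma~\ref{lstronghom} directly from the Krasner equality $va=\mbox{\rm kras}(a,K)$, but the three steps you defer are exactly the substance of the claim: (i) the assertion that $p\mid {\rm e}$ would ``produce conjugates at distance strictly greater than $va$'' is not an argument --- it is not said whose conjugates are meant, and a large spread among conjugates of some auxiliary element $b$ only contradicts $\mbox{\rm kras}(b,K)=vb$, not $\mbox{\rm kras}(a,K)=va$, without a further comparison that you do not supply; (ii) nothing in your sketch forces $ca^{\rm e}v$ to be separable over $Kv$ when $Kv$ is not perfect; (iii) the ``degree count'' $[K(a):K]={\rm ef}$ is precisely the defectlessness statement, and the fundamental inequality only gives $[K(a):K]\geq {\rm ef}$. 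Note also that Lemma~\ref{lstronghom} points in the opposite direction (its hypotheses \emph{imply} strong homogeneity); the converse you need is exactly the unproved remark following Proposition~\ref{hit}, so it cannot be quoted.

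The missing idea is ramification theory. Since $a\in K\sep$ and $(K,v)$ is henselian, consider the ramification group $G^r=\Gal(K\sep|K^r)$; every $\sigma\in G^r$ satisfies $v(\sigma b-b)>vb$ for all $b\in K\sep\setminus\{0\}$, whereas homogeneity of $a$ gives $v(\sigma a-a)\leq\mbox{\rm kras}(a,K)=va$ whenever $\sigma a\neq a$. Hence every $\sigma\in G^r$ fixes $a$, so $a\in K^r$, and $K(a)|K$ is tame because $K^r$ is the maximal tame extension of the henselian field $(K,v)$. This one observation delivers all three of your deferred facts at once ($p\nmid(vK(a):vK)$, separability of $K(a)v|Kv$, and absence of defect); trying to reprove them by hand from the Krasner equality alone is where your proposal stalls. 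A minor further point: you do need $a$ separable over $K$ for any of this --- it is built into ``strongly homogeneous'' and, as the remark after Proposition~\ref{hit} explains, this is precisely the condition that was overlooked in \cite{[K1]}; an inseparable $a$ cannot generate a tame extension.
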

\n
It can also be shown that if $va\,=\,\mbox{\rm kras}(a,K)$ {\it and} $a$
is separable over $K$, then $a$ satisfies the conditions of
Lemma~\ref{lstronghom}. In~\cite{[K1]}, the separability condition was
forgotten.

\pars
Because of the change in the definition, the proof of Proposition~5.11
of~\cite{[K1]} changes significantly. Here is the proposition with its
new proof:

\begin{proposition}
An element $b\in \tilde{K}$ belongs to a tame extension of the henselian
field $(K,v)$ if and only if there is a finite homogeneous sequence
$a_1,\ldots,a_k$ for $b$ over $(K,v)$ such that $b\in K(a_k)$.
\end{proposition}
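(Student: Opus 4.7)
The plan is to verify the two directions separately; the backward one is immediate while the forward one requires an inductive construction together with one classical ingredient from Kaplansky's theory of pseudo-Cauchy sequences.

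For $(\Leftarrow)$, given a homogeneous sequence $a_1,\ldots,a_k$ for $b$ over $K$ with $b\in K(a_k)$, Proposition~\ref{hit} shows $K_{\eu S}$ is tame over $K$; since each $a_i$ is algebraic over the henselian field $K$, both $K_{\eu S}$ and $K(a_k)$ are henselian, so Lemma~\ref{KSinL} gives $K_{\eu S}=K_{\eu S}^h=K(a_k)^h=K(a_k)$, and $b\in K_{\eu S}$ lies in a tame extension of $K$.

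For $(\Rightarrow)$, suppose $b$ lies in a tame extension of $K$. Then $K(b)|K$ is itself finite and tame, so I induct on $n:=[K(b):K]$, with $n=1$ handled by the empty sequence. For $n>1$, the heart of the matter is to find a single homogeneous approximation $a_1\in K(b)$ of $b$ over $K$ with $[K(a_1):K]>1$. Granted this, $K(a_1)$ is henselian (being algebraic over henselian $K$), $K(b)|K(a_1)$ is again finite tame, of degree $n/[K(a_1):K]<n$; the inductive hypothesis applied to $b-a_1$ over $K(a_1)$ yields a homogeneous sequence $a'_1,\ldots,a'_m$ for $b-a_1$ over $K(a_1)$ with $b-a_1\in K(a_1)(a'_m)$. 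A direct translation-by-$a_1$ check shows that $a_1,a_1+a'_1,\ldots,a_1+a'_m$ is a homogeneous sequence for $b$ over $K$; by Lemma~\ref{KSinL} together with henselianness of algebraic extensions of $K$, $K(a_1,a'_1,\ldots,a'_m)=K(a_1+a'_m)$, hence $b=a_1+(b-a_1)\in K(a_1+a'_m)$.

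To construct $a_1$, choose $d^*\in K$ attaining $D:=\sup\{v(b-d):d\in K\}$ and set $b':=b-d^*$. Let $e\in\N$ be least with $evb'\in vK$; tameness forces $e$ to be prime to $\chara Kv$ (as $e$ divides $[vK(b):vK]$), and for any $c\in K$ with $vcb'^e=0$ the residue $cb'^ev$ is separable algebraic over $Kv$. If $e=1$, the choice of $d^*$ precludes $cb'v\in Kv$, for otherwise some unit $e_0\in K$ with $e_0v=cb'v$ would give $v(b'-e_0/c)>vb'=D$, contradicting the definition of $D$. Thus Lemma~\ref{exkrap} applies to $b'$, producing $\alpha$ strongly homogeneous over $K$ with $v(b'-\alpha)>vb'$; then $a_1:=d^*+\alpha$ satisfies $a_1-d^*=\alpha$ strongly homogeneous over $K$ and $v(b-a_1)=v(b'-\alpha)>vb'=v(b-d^*)\geq vb$, so $a_1$ is a homogeneous approximation of $b$ over $K$. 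Lemma~\ref{krlkra} gives $a_1\in K(b)^h=K(b)$, and Lemma~\ref{lstronghom} yields $[K(a_1):K]=[K(\alpha):K]=ef>1$.

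The main technical obstacle is guaranteeing that $D$ is attained. Krasner's lemma rules out $D=\infty$, but attainment of a finite supremum rests on the nontrivial tameness of $K(b)|K$. Here I invoke the classical theorem of Kaplansky: were $D$ not attained, we could pick $(d_n)\subseteq K$ with $v(b-d_n)$ strictly increasing to $D$, a pseudo-Cauchy sequence of algebraic type (since the algebraic element $b$ serves as pseudo-limit), so that $K(b)|K$ would be immediate; but an immediate finite tame extension is defectless and therefore trivial, contradicting $n>1$.
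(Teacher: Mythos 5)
Your overall architecture (produce one homogeneous approximation $a_1$ of $b$ via Lemma~\ref{exkrap} applied to $b-d$ for a suitable $d\in K$, then pass to $b-a_1$ over $K(a_1)$) is essentially the paper's, repackaged as an induction on $[K(b):K]$ in place of the paper's termination argument via Proposition~\ref{SNpure}; the backward direction and the splicing of the two homogeneous sequences are fine. The genuine gap is in your last paragraph, where you argue that $D=\sup\{v(b-d):d\in K\}$ is attained. From the fact that $b$ is an algebraic pseudo-limit of a pseudo-Cauchy sequence $(d_\nu)$ in $K$ without pseudo-limit in $K$ you infer that $K(b)|K$ would be immediate; that inference is false. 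Kaplansky's theorem only says that the sequence is of algebraic type and that \emph{some} proper immediate algebraic extension $(K(z)|K,v)$ exists, where $z$ is a root of a polynomial of \emph{minimal} degree whose values along the sequence are not eventually fixed; there is no reason for that polynomial to be the minimal polynomial of $b$, nor for $K(z)$ to be related to $K(b)$. Moreover, the existence of such a $z$ contradicts nothing in your hypotheses: $(K,v)$ is only assumed henselian, not a tame or algebraically maximal field, so it may well admit proper immediate (i.e.\ defect) algebraic extensions while $K(b)|K$ is tame. As written, the attainment of $D$ --- on which your entire construction of $a_1$ rests --- is unproved.

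The fact you need is true, and the paper's proof establishes it (implicitly) while bypassing the question: tameness yields a $K$-basis $\eta_i\vartheta_j$ of $K(b)$ with $\eta_1=\vartheta_1=1$, the $v\eta_i$ in distinct cosets modulo $vK$ and the $\vartheta_jv$ linearly independent over $Kv$; writing $b=\sum_{i,j}c_{ij}\eta_i\vartheta_j$, Lemma~2.8 of \cite{[K1]} gives $v(b-d)=\min\{v(c_{11}-d),\,\beta\}$ for every $d\in K$, where $\beta:=\min_{(i,j)\ne(1,1)}vc_{ij}\eta_i\vartheta_j$ does not depend on $d$. Hence $v(b-d)\le\beta$ always, with equality at $d=c_{11}$, so the supremum is attained; and the same expansion directly verifies the hypotheses of Lemma~\ref{exkrap} for $b-d$ (if $i_0>1$ then $v(b-d)\notin vK$ forces ${\rm e}>1$; if $i_0=1$ the residue of $c(b-d)$ is a nontrivial $Kv$-combination of the $\vartheta_jv$ with some index $j>1$ and so lies outside $Kv$). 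If you replace your Kaplansky paragraph by this computation, the remainder of your argument goes through.
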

\begin{proof}
Suppose that such a sequence exists. By Proposition~5.10 of \cite{[K1]},
$K_{\eu S}$ is a tame extension of $K$. Since $b\in K(a_k)\subseteq
K_{\eu S}$, it contains $b$.

For the converse, let $b$ be an element in some tame extension of
$(K,v)$. Since $K(b)|K$ is finite, also the extensions $vK(b)|vK$ and
$K(b)v|Kv$ are finite. Take $\eta_i\in K(b)$ with $\eta_1=1$ such that
$v\eta_i\,$, $1\leq i\leq\ell$, belong to distinct cosets modulo $vK$.
Further, take $\vartheta_j \in {\cal O}_{K(b)}$ with $\vartheta_1 =1$
such that $\vartheta_j v$, $1\leq j\leq m$, are \mbox{$Kv$-linearly}
independent. Then by Lemma~2.8 of \cite{[K1]}, the elements $\eta_i
\vartheta_j\,$, $1\leq i\leq \ell$, $1\leq j\leq m$, are $K$-linearly
independent. Since $(K(b)|K,v)$ is tame, we have that $[K(b):K]=\ell m$,
so these elements form a basis of $K(b)|K$. Now we write
\[
b\>=\>\sum_{i,j} c_{ij} \eta_i\vartheta_j
\]
with $c_{ij}\in K$. Again by Lemma~2.8 of \cite{[K1]},
%
\[
vb\>=\>v\sum_{i,j} c_{ij} \eta_i\vartheta_j\>=\>
\min_{i,j}\, v c_{ij}\eta_i\vartheta_j\>=\>
\min_{i,j}\, (vc_{ij}\,+\,v\eta_i)\;.
\]
If $c_{11}\eta_1\vartheta_1=c_{11}\in K$ happens to be the unique
summand of minimal value, then we set $d=c_{1,1}$; otherwise, we set
$d=0$.

Choose $i_0$ such that $v(b-d)$ is in the coset of $v\eta_{i_0}$.
If $vc_{i_1j_1} \eta_{i_1}=vc_{i_2j_2} \eta_{i_2}$ then $i_1=i_2$
since the values $v\eta_i$ are in distinct cosets modulo $vK$.
So we can list the summands of minimal value as $c_{i_0j_r}
\eta_{i_0}\vartheta_{j_r}$, $1\leq r\leq t$, for some
$t\leq m$. We obtain that
\begin{equation}                            \label{b-d-a}
v\left(b-d-\sum_{r=1}^{t}c_{i_0j_r}\eta_{i_0}\vartheta_{j_r}\right)
\>>\>v(b-d)\>.
\end{equation}

Take e to be the least positive integer such that $ev(b-d)\in vK$.
Choose $c\in K$ such that $vc(b-d)^{\rm e}=0$. Since $Kv$ is perfect,
$c(b-d)^{\rm e}v$ is separable-algebraic over $Kv$. If $i_0>1$, then
$v(b-d)\notin vK$. Hence ${\rm e}>1$ and since $(K,v)$ is a tame field,
e is not divisible by $\chara Kv$. If $i_0=1$, then ${\rm e}=1$ and
$\eta_{i_0}=1$, and in view of (\ref{b-d-a}),
\[
c(b-d)v\>=\>\sum_{r=1}^{t}(cc_{i_0j_r}v)\cdot \vartheta_{j_r}v\>.
\]
This is not in $Kv$ since by our choice of $d$, some $j_k>1$ must appear
in the sum and the residues $\vartheta_{j_r}v$, $1\leq r\leq t$, are
linearly independent over $Kv$.

We conclude that $b-d$ satisfies the assumptions of Lemma~\ref{exkrap}.
Hence there is an element $a\in\tilde{K}$, strongly homogeneous over $K$
and such that $v(b-d-a)>v(b-d)\geq vb$. We set $a_1:=a+d$ to obtain that
$a_1$ is a homogeneous approximation of $b$ over $K$. By the foregoing
proposition, $K(a_1)$ is a tame extension of $K$ and therefore, by the
general facts we have noted following the definition of tame extensions
in \cite{[K1]}, $K(a_1, b-a_1)$ is a tame extension of $K(a_1)$.

We repeat the above construction, replacing $b$ by $b-a_1\,$. By
induction, we build a homogeneous sequence for $b$ over $K$.
It cannot be infinite since $b$ is algebraic over $K$ (cf.\
Proposition~\ref{SNpure}). Hence it stops with some element
$a_k\,$. Our construction shows that this can only happen if $b\in
K(a_1,\ldots,a_k)$, which by Lemma~\ref{KSinL} is equal to $K(a_k)$.
\end{proof}

Finally, some small corrections are needed in the proof of
Proposition~5.12 of~\cite{[K1]}:

\begin{proposition}                         \label{tame}
Assume that $(K,v)$ is a henselian field. Then $(K,v)$ is a tame field
if and only if for every element $x$ in any extension $(L,v)$ of $(K,v)$
there exists a weakly pure homogeneous sequence for $x$ over $K$,
provided that $x$ is transcendental over $K$.
\end{proposition}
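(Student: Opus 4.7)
Assume $(K,v)$ is a tame henselian field, set $p:=\chara Kv$, and let $x$ be transcendental over $K$. The plan is to construct a weakly pure homogeneous sequence for $x$ by recursion. Set $a_0:=0$. Having produced a homogeneous sequence $a_1,\ldots,a_{i-1}$ with $K_{i-1}:=K(a_0,\ldots,a_{i-1})$, one stops if $(K_{i-1}(x)|K_{i-1},v)$ is already weakly pure in $x$; otherwise one applies Lemma~\ref{exkrap} to $b:=x-a_{i-1}$ to obtain a strongly homogeneous $a$ over $K_{i-1}$ with $v(b-a)>vb$, and sets $a_i:=a_{i-1}+a$ (with $d=0$, so that $a$ qualifies as a homogeneous approximation of $b$). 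The applicability of Lemma~\ref{exkrap} at every step is where tameness enters: by Proposition~\ref{hit}, $K_{i-1}|K$ is a tame extension and hence $K_{i-1}$ is itself a tame field, so $vK_{i-1}$ is $p$-divisible (forcing the minimal ${\rm e}\in\N$ with ${\rm e}\,vb\in vK_{i-1}$ to be coprime to $p$) and $K_{i-1}v$ is perfect (so any algebraic residue is automatically separable over $K_{i-1}v$); the failure of weak purity supplies the side condition ``${\rm e}>1$ or $cb^{\rm e}v\notin K_{i-1}v$'' required by the lemma. If the recursion terminates, the resulting finite sequence is weakly pure by construction; if it does not, the infinite sequence is, by Proposition~\ref{SNpure}, a pure (hence weakly pure) homogeneous sequence.

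\textbf{Backward direction.} Assume every transcendental $x$ over $K$ admits a weakly pure homogeneous sequence. One establishes that $(K,v)$ is a tame field by verifying perfectness of $Kv$, $p$-divisibility of $vK$, and defectlessness (henselianness being given). The common strategy is: suppose one of the conditions fails, choose a transcendental witness $x$, and contradict the existence of a weakly pure homogeneous sequence $(a_i)_{i\in S}$ for $x$ by observing that $K_{\eu S}$ is henselian (algebraic over $K=K^h$) and a tame extension of $K$ (Proposition~\ref{hit}), while Theorem~\ref{thIC} controls $vK_{\eu S}$ and $K_{\eu S}v$. For perfectness of $Kv$: if $\alpha\in Kv\setminus(Kv)^p$, lift to $c\in K$ and take $v(x^p-c)>0$; then $\alpha^{1/p}\in K(x)v$ is inseparable over $Kv$, yet Theorem~\ref{thIC} would force $\alpha^{1/p}\in K_{\eu S}v$, contradicting $K_{\eu S}v|Kv$ being separable-algebraic. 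For $p$-divisibility of $vK$: take $\gamma\in vK\setminus pvK$ and $x$ transcendental with $vx=\gamma/p$; Theorem~\ref{thIC} would place $\gamma/p\in vK_{\eu S}$, whereas $vK_{\eu S}/vK$ carries no $p$-torsion by tameness. For defectlessness: from a finite separable defect extension $L|K$ pick $b\in L$ outside its maximal tame subextension $L_T$, and take $x$ transcendental with $v(x-b)>\mbox{\rm kras}(b,K)$; Krasner's lemma places $b\in K(x)^h$, so $b\in\ic(K(x)|K,v)=K_{\eu S}^h=K_{\eu S}$, contradicting $b\notin L_T$ since $K_{\eu S}$ is tame over $K$.

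\textbf{Main obstacle.} The hardest point will be in the forward direction: aligning the failure of ``$(K_{i-1}(x)|K_{i-1},v)$ is weakly pure in $x$'' with the exact hypothesis of Lemma~\ref{exkrap}. The revised definition of ``homogeneous approximation'' is precisely what makes this alignment go through cleanly, which is the ``small correction'' the authors allude to. In the backward direction, routing each of the three failure modes through Theorem~\ref{thIC} and Proposition~\ref{hit} is largely mechanical once the transcendental witnesses are in hand, though some care is needed to confirm that the witnessing $x$ can always be realised in a valued field extension of $(K,v)$.
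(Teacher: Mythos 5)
Your forward direction has a genuine gap at the point you yourself flag as the hardest: you cannot take $d=0$. Applying Lemma~\ref{exkrap} to $b:=x-a_{i-1}$ directly fails whenever, say, $v(x-a_{i-1})\in vK_{i-1}$ and the residue of $c(x-a_{i-1})$ already lies in $K_{i-1}v$; then the smallest possible ${\rm e}$ is $1$ and $cb^{\rm e}v\in K_{i-1}v$, so the side condition of the lemma is violated and no strongly homogeneous approximation at that level exists. The failure of weak purity does \emph{not} by itself supply the side condition for $b=x-a_{i-1}$. What the paper does instead is first argue (using that $K_{i-1}$, being a finite tame extension of the tame field $K$, is itself a tame field, so that any pseudo Cauchy sequence in $K_{i-1}$ with limit $x$ would be of transcendental type and force weak purity) that the set $v(x-a_{i-1}-K_{i-1})$ attains a maximum at some $d\in K_{i-1}$; it is the \emph{maximality} of $v(x-a_{i-1}-d)$ that yields ``${\rm e}>1$ or $c(x-a_{i-1}-d)^{\rm e}v\notin K_{i-1}v$'', Lemma~\ref{exkrap} is applied to $x-a_{i-1}-d$, and one sets $a_i:=a_{i-1}+a+d$. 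This nonzero $d$ is exactly what the corrected definition of ``homogeneous approximation'' is built to accommodate; omitting it breaks the induction step.

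Your backward direction takes a genuinely different route from the paper and is only partly sound. The paper argues uniformly: it picks $b\in\tilde K$ generating a non-tame extension, puts $x$ at value-transcendental distance $\gamma>v\tilde K$ from $b$, and shows any weakly pure homogeneous sequence would force $b\in K_{\eu S}$, contradicting Proposition~\ref{hit}; this single witness covers all three failure modes of tameness at once. Your defectlessness case is essentially this argument (and works), and your perfectness case can be pushed through via Theorem~\ref{thIC}. But your $p$-divisibility case is broken as stated: for a merely \emph{weakly} pure sequence, Theorem~\ref{thIC} only asserts that the torsion subgroup of $vK(x)/vK_{\eu S}$ is finite; the conclusion ``$\gamma/p\in vK_{\eu S}$'' uses the relative divisible closure statement, which Theorem~\ref{thIC} guarantees only for \emph{pure} sequences. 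Indeed, with $vx=\gamma/p$ the class of $vx$ in $vK(x)/vK_{\eu S}$ may simply be a nontrivial torsion element, and depending on how $v$ is extended the empty sequence may already be weakly pure, so no contradiction arises. You would need either to choose the witness more carefully (as the paper does, making the extension value-transcendental over $\tilde K$) or a stronger form of Theorem~\ref{thIC}.
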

\begin{proof}
First, let us assume that $(K,v)$ is a tame field and that $x$ is an
element in some extension $(L,v)$ of $(K,v)$, transcendental over $K$.
We set $a_0=0$. We assume that $k\geq 0$ and that $a_i$ for $i\leq k$
are already constructed. Like $K$, also the finite extension $K_k:=
K(a_0,\ldots,a_k)$ is a tame field. Therefore, if $x$ is the pseudo
limit of a pseudo Cauchy sequence in $K_k\,$, then this pseudo Cauchy
sequence must be of transcendental type, and $K_k(x)|K_k$ is pure and
hence weakly pure in $x$.

If $K_k(x)|K_k$ is weakly pure in $x$, then we take $a_k$ to be the last
element of ${\eu S}$ if $k>0$, and ${\eu S}$ to be empty if $k=0$.

Assume that this is not the case. Then $x$ cannot be the pseudo limit of
a pseudo Cauchy sequence without pseudo limit in $K_k\,$. So the set
$v(x-a_k- K_k)$ must have a maximum, say $x-a_k-d$ with $d\in K_k$.
Since we assume that $K_k(x)|K_k$ is not weakly pure in $x$, there exist
${\rm e}\in\N$ and $c\in K_k$ such that $vc(x-a_k-d)^{\rm e}=0$ and
$c(x-a_k-d)^{\rm e}v$ is algebraic over $K_kv$. Since $K_k$ is a tame
field, its residue field is perfect, so $c(x-a_k-d)^{\rm e}v$ is
separable-algebraic over $K_kv$. Also, if $\chara Kv=p>0$, then $vK_k$
is $p$-divisible and therefore, e can be chosen to be prime to $p$.
Since $v(x-a_k-d)$ is maximal in $v(x-a_k- K_k)$, we must have that
${\rm e}>1$ or $c(x-a_k-d)^{\rm e}v\notin K_kv$.

Now Lemma~\ref{exkrap} shows that there exists $a\in \tilde{K}$,
strongly homogeneous over $K_k$ and such that $v(x-a_k-d-a)>v(x-a_k-d)$.
So $a+d$ is a homogeneous approximation of $x-a_k$ over $K_k$, and we
set $a_{k+1}:=a_k+a+d$. This completes our induction step. If our
construction stops at some $k$, then $K_k(x)|K_k$ is weakly pure in $x$
and we have obtained a weakly pure homogeneous sequence. If the
construction does not stop, then $S=\N$ and the obtained sequence is
pure homogeneous.

\pars
For the converse, assume that $(K,v)$ is not a tame field. We
choose an element $b\in\tilde{K}$ such that $K(b)|K$ is not a tame
extension. On $K(b,x)$ we take the valuation $v_{b,\gamma}$ with
$\gamma$ an element in some ordered abelian group extension such that
$\gamma>vK$. Choose any extension of $v$ to $\tilde{K}(x)$. Since $vK$
is cofinal in $v\tilde{K}$, we have that $\gamma>v\tilde{K}$. Since
$b\in \tilde{K}$, we find $\gamma\in v\tilde{K}(x)$. Hence,
$(\tilde{K}(x)|\tilde{K},v)$ is value-transcendental.

Now suppose that there exists a weakly pure homogeneous sequence
${\eu S}$ for $x$ over $K$. By Lemma~3.3 of~\cite{[K1]}, also
$(K_{\eu S}(x)|K_{\eu S},v)$ is value-transcendental. Since $(K_{\eu S}(x)
|K_{\eu S}, v)$ is also weakly pure, it follows that there must be some
$c\in K_{\eu S}$ such that $x-c$ is a value-transcendental element (all
other cases in the definition of ``weakly pure'' lead to immediate or
residue-transcendental extensions). But if $c\ne b$ then $v(b-c)\in
v\tilde{K}$ and thus, $v(c-b)<\gamma$. This implies
\[
v(x-c)=\min\{v(x-b),v(b-c)\}=v(b-c) \in v\tilde{K}\>,
\]
a contradiction. This shows that $b=c\in K_{\eu S}$. On the other hand,
$K_{\eu S}$ is a tame extension of $K$ by Proposition~\ref{hit} and
cannot contain $b$. This contradiction shows that there cannot exist a
weakly pure homogeneous sequence for $x$ over~$K$.
\end{proof}

%
%
\section{Other corrections for the paper \cite{[K1]}}  \label{sectcor}
\noindent
$\bullet$ \ In the paragraph before Example 3.9, ``relatively closed
subfield'' should be: ``relatively algebraically closed subfield''.

\sn
$\bullet$ \ The sentence after the first display in Lemma 3.13 should
read: ``Then $K(a)\subseteq \ic (K(x)|K,v)$''.

\sn
$\bullet$ \ In the third line of Theorem~6.1, ``$\Gamma$'' should be
replaced by ``$G$''.


\sn
$\bullet$ \ In the proof of Theorem 6.1, ``for the proof of assertion a)
it now suffices...'' (line after the fifth display) should be replaced
by: ``for the proof of assertions a) and b) it now suffices...''.

 \bn\bn


%
%
\section{Proofs of Theorems~\ref{gen1.1}
and~\ref{tame_rat_ff}}                      \label{sectproofs}
\noindent
{\bf Proof of Theorem~\ref{gen1.1}:}
\sn
%
Since $\Gamma / vL$ is a finite group, $k|Lv$ is a finite extension and
$v$ is nontrivial on $L$, from Theorem~2.14 of~\cite{[K1]} it follows
that there is a separable-algebraic extension $(L(a),v)$ of $(L,v)$ such
that $vL(a)=\Gamma$ and $L(a)v=k$. Then $L(a)$ is an infinite
separable-algebraic extension of $K^h$. Therefore, without loss of
generality we can assume that $vL=\Gamma $ and $Lv=k$.

Since $L|K^h$ is an infinite separable-algebraic extension, from
Theorem~\ref{MTai} it follows that $(L,v)$ admits an immediate
extension $(M,v)$ of infinite transcendence degree. Take elements
$x_1,\ldots, x_{n-1},y\in M$ algebraically independent over $L$ and set
\[
E\>:=\> K(x_1,\ldots, x_{n-1})\subseteq M.
\]
As $(L(x_1,\ldots, x_{n-1})|L,v)$ is an immediate extension, we obtain that
\[
vE\> \subseteq\> vL(x_1,\ldots, x_{n-1})\>=\> \Gamma
\textrm{ \, and \, }
Ev\> \subseteq  \> L(x_1,\ldots, x_{n-1})v\> =\> k.
\]
Since $L|K^h$ and $K^h|K$ are separable-algebraic extensions, also $L|K$
is separable-algebraic. Furthermore $vL/vK$ is a finite group and the
extension $Lv|Kv$ is finite, hence there is a finite subextension $L'|K$
of $L|K$ such that $vL'=vL$ and \mbox{$L'v=Lv$.} Moreover, by the
Theorem of Primitive Element, we can choose $L'|K$ to be a simple
extension $K(b)|K$ for some $b\in L$. Then $E(b)\subseteq L(x_1,\ldots,
x_{n-1})$, hence $vE(b)=vL=\Gamma $ and \mbox{$E(b)v=Lv=k$.}

Multiplying $y$ by an element in $K^{\times}$ of large enough value if
necessary, we can assume that
\[
vy\> > \> \textrm{kras}(b,E)\in \widetilde{vE}=\widetilde{vK}.
\]
Since
\[
E(b)\>\subseteq\> E(y,b)\>\subseteq\> L(x_1,\ldots, x_{n-1},y)
\>\subseteq\> M\>,
\]
the extensions $(L(x_1,\ldots, x_{n-1},y)|E(b),v)$ and $(E(y,b)|E(b),v)$
are immediate. Take a transcendental element $x_n$ in some field
extension of $E$ and define by $y\mapsto x_n-b$ an isomorphism of
$E(y,b)$ onto $E(x_n,b)$. This isomorphism induces a valuation $w$ on
$E(x_n,b)$, which is an extension of the valuation $v$ of $E(b)$ with
$w(x_n-b)=vy$. Hence, $w(x_n-b)>\,$kras$(b,E)$ and from Lemma~3.13
of~\cite{[K1]} we deduce that
\begin{eqnarray*}
vL\>=\>vE(b)\>\subseteq\> wE(x_n) \>=\> vE(y+b) \>\subseteq \>
vL(x_1,\ldots, x_{n-1},y)\>=\>vL,\\
vL\>=\>E(b)v\>\subseteq\> E(x_n)w \>=\> E(y+b)v \>\subseteq \>
L(x_1,\ldots, x_{n-1},y)v\>=\>Lv,
\end{eqnarray*}
since $(L(x_1,\ldots, x_{n-1},y)|L,v)$ is an immediate extension.
Thus equality holds everywhere and $w$ is an extension of $v$ from $K$
to $K(x_1,\ldots, x_n)$ such that
\[
wK(x_1,\ldots,x_n)\>=\>wE(x_n)\>=\>vL\>=\>\Gamma  \textrm{ \ and \ }
 K(x_1,\ldots,x_n)w\>=\>E(x_n)w\>=\>Lw\>=\>k.
\]
\QED

\parm
For the proof of Theorem~\ref{tame_rat_ff} we will need the following
lemma from~\cite{[B]}.

\begin{lemma}                            \label{rat_ff_IC}
Assume that
$vF/vK$ is a torsion group and $Fv|Kv$ is an algebraic extension. Fix an
extension of $v$ to $\widetilde{F}$ and set $L:=IC(F|K,v)$. If the
order of each element of $vF/vK$ is prime to the characteristic exponent
of $Kv$ and $Fv|Kv$ is separable, then $vL=vF$ and $Lv=Fv$, and the
extension $(L(x_1,\ldots,x_n)|L,v)$ is immediate.
\end{lemma}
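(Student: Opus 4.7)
\medskip
\noindent
\textbf{Proof proposal.}
The plan is to reduce the immediateness claim to the two equalities $vL=vF$ and $Lv=Fv$. Since $L \subseteq F^h$ and the henselization is immediate, the trivial inclusions $vL \subseteq vF^h = vF$ and $Lv \subseteq F^h v = Fv$ hold. Moreover $L(x_1,\ldots,x_n) = L\cdot F$ is a subfield of $F^h$, so one also has $vL \subseteq v L(x_1,\ldots,x_n) \subseteq vF^h = vF$ and $Lv \subseteq L(x_1,\ldots,x_n)v \subseteq F^h v = Fv$. Hence, once the reverse inclusions $vF \subseteq vL$ and $Fv \subseteq Lv$ are established, the equalities $vL=vF$ and $Lv=Fv$ and the immediateness of $(L(x_1,\ldots,x_n)|L,v)$ drop out simultaneously.

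For $Fv \subseteq Lv$, I would take $\xi \in Fv$ and exploit that $Fv|Kv$ is separable-algebraic. Let $g$ be a monic polynomial over the valuation ring of $K$ whose reduction modulo $v$ is the minimal polynomial of $\xi$ over $Kv$, and pick any $b$ in the valuation ring of $F$ with residue $\xi$. Then $vg(b)>0$ while $vg'(b)=0$, the latter by separability. Applying Hensel's lemma in the henselian field $F^h$ produces $\alpha \in F^h$ with $g(\alpha)=0$ and $\alpha v = \xi$. Because $g\in K[X]$ is monic, $\alpha$ is algebraic over $K$, so $\alpha \in \ic(F|K,v) = L$, and thus $\xi \in Lv$.

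For $vF \subseteq vL$, I would take $\gamma \in vF$ and use the torsion hypothesis to pick the least positive integer $e$, necessarily prime to the residue characteristic, with $e\gamma \in vK$. Choose $b \in F^\times$ of value $\gamma$ and $c \in K^\times$ with $vcb^e = 0$. Then $cb^ev \in Fv$ is separable-algebraic over $Kv$, so the previous step supplies $\alpha \in L$ with $\alpha v = cb^e v$. The element $\alpha/(cb^e) \in F^h$ has residue $1$, and since $e$ is prime to the residue characteristic, Hensel's lemma in $F^h$ extracts an $e$-th root $\beta \in F^h$ with $\beta v = 1$. Setting $a := \beta b \in F^h$, I get $a^e = \alpha/c \in \tilde{K}$, so $a$ is algebraic over $K$, hence lies in $L$; and $va = v\beta + vb = \gamma$, so $\gamma \in vL$.

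The main obstacle is to ensure that the Hensel-lifted elements land in $L$ rather than merely in $F^h$, and this is precisely why one lifts only along polynomials defined over $K$: in the residue step one Hensel-lifts a root of a polynomial from $K[X]$, and in the value step one adjoins an $e$-th root of an element already in $\tilde K$. The separability of $Fv|Kv$ and the primality of $e$ to the residue characteristic are exactly the hypotheses that make both Hensel arguments run; the overall construction closely parallels the proof of Lemma~\ref{exkrap}, only carried out inside $F^h$ instead of $\tilde K$.
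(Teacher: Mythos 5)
Your proof is correct. Note that the paper does not actually prove this lemma --- it is quoted from the thesis \cite{[B]}, and only its statement appears here --- so there is no in-text argument to compare yours against. Your reduction of the immediateness claim to the two inclusions $vF\subseteq vL$ and $Fv\subseteq Lv$ is valid, since $L\subseteq L(x_1,\ldots,x_n)\subseteq F^h$ and the henselization $(F^h|F,v)$ is immediate; and the two Hensel-lifting steps go through exactly because of the stated hypotheses ($Fv|Kv$ separable makes $vg'(b)=0$, and the orders in $vF/vK$ being prime to the characteristic exponent makes $ve=0$ for the $e$-th root extraction). The crux, which you state, is that the lifted elements are algebraic over $K$ and lie in $F^h$, hence in $L=\ic(F|K,v)$ by the very definition of the implicit constant field. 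As you observe, this is essentially the construction of Lemma~\ref{exkrap} transplanted into $F^h$.
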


\sn
\bfind{Proof of Theorem \ref{tame_rat_ff}:}
Assume that at least one of the two extensions $\Gamma|vK$ and $k|Kv$ is
infinite or $(K,v)$ admits an immediate extension of transcendence
degree $n$. Then parts A2) and B4) of Theorem 1.6 of~\cite{[K1]} state
that in both cases the valuation $v$ admits an extension to $F$ such
that $vF = \Gamma$ and $Fv = k$.

Assume now that there is an extension of $v$ to $F$ with $vF = \Gamma$
and $Fv = k$. Fix an extension of this valuation to $\widetilde{F}$ and
denote it again by $v$. Take $K^h$ and $F^h$ to be the henselizations of
$K$ and $F$ with respect to this extension. Set $L:= IC(F|K,v)$. Then
$L$ is a separable-algebraic extension of $K$ which contains $K^h$. As
$vK$ is $p$-divisible and $Kv$ is perfect by assumption, the order of
each element of $\Gamma /vK$ is prime to $p$ and $k|Kv$ is a
separable-algebraic extension. Hence, Lemma~\ref{rat_ff_IC} yields that
$(L(x_1,\ldots,x_n)|L,v)$ is an immediate extension with $vL=\Gamma$ and
$Lv=k$. Moreover, if $\Gamma/vK=vL/vK^h$ or $k|Kv=Lv| K^hv$ is infinite,
then by the fundamental inequality, also the extension $L|K^h$ is
infinite.

Suppose that the extensions $\Gamma|vK$ and $k|Kv$ are finite. Assume
first that $L|K^h$ is an infinite extension. Take a finite subextension
$E|K^h$ of degree bigger than $(\Gamma :vK)[k:Kv]$. Then
\[
[E:K^h]\> >\>(\Gamma :vK)[k:Kv]\>=\>(vL:vK^h)[Lv:K^hv]\> \geq \>
(vE:vK^h)[Ev:K^hv],
\]
and thus the extension $(E|K^h,v)$ has a nontrivial defect. In this
case, or if $L|K^h$ is itself a finite defect extension, we have that
$p>1$ and part 2) of Theorem~\ref{almK_inftrdeg} yields that every
maximal immediate extension of $(K^h,v)$ is of infinite transcendence
degree. Thus the same holds for $(K,v)$ and in particular, $(K,v)$
admits an immediate extension of transcendence degree $n$.

It remains to consider the case of $(L|K^h,v)$ finite and defectless.
%
%
As the extension $(L(x_1,\ldots,x_n)|L,v)$ is immediate, it is
contained in a maximal immediate extension $(M|L,v)$. If there is a
maximal immediate extension of finite transcendence degree over $L$,
then by part 1) of Theorem~\ref{almK_inftrdeg} it is isomorphic to $M$
over $L$. This shows that every maximal immediate extension is of
transcendence degree at least $n$ over~$L$.

Take a maximal immediate extension $(M,w)$ of $(K^h,v)$. Take the unique
extension of the valuation $w$ of $M$ to $M.L$ and call it again $w$.
Since $K^h$ is henselian, the restriction of $w$ to $L$ coincides with
$v$. As $L|K^h$ is a finite defectless extension of henselian fields, by
Lemma~2.5 of~\cite{[K2]} it is linearly disjoint from $M|K^h$ and the
extension $(M.L|L,w)$ is immediate. Since a finite extension of a
maximal field is again maximal, the field $(M.L,w)$ is a maximal
immediate extension of $(L,v)$. As we have already shown,
trdeg$M.L|L\geq n$. Hence also trdeg $M|K^h\geq n$. Since $(M,v)$ is
also a maximal immediate extension of $(K,v)$, we deduce that $K$ admits
an immediate extension of transcendence de gree $n$.
\QED

\newcommand{\lit}[1]{\bibitem #1{#1}}

\end{document}